\newtheorem{theorem}{Theorem}[section]
\newtheorem{prop}[theorem]{Proposition}
\newtheorem{lemma}[theorem]{Lemma}
\newtheorem{remark}[theorem]{Remark}
\newtheorem{question}[theorem]{Question}
\newtheorem{definition}[theorem]{Definition}
\newtheorem{cor}[theorem]{Corollary}
\newtheorem{example}[theorem]{Example}
\begin{document}

\title[Cohomology decomposition of almost complex 4-manifolds]{Symplectic forms and
cohomology decomposition of almost complex 4-manifolds}
\author{Tedi Draghici}
\address{Department of Mathematics\\Florida International University\\Miami, FL 33199}
\email{draghici@fiu.edu}
\author{Tian-Jun Li }
%    Information for first author
%    Address of record for the research reported here
\address{School  of Mathematics\\  University of Minnesota\\ Minneapolis, MN 55455}
\email{tjli@math.umn.edu}
\author{Weiyi Zhang}
\address{School  of Mathematics\\  University of Minnesota\\ Minneapolis, MN 55455}
\email{zhang393@math.umn.edu}

\begin{abstract} For any compact almost complex manifold $(M,J)$,
the last two authors \cite{LZ} defined two subgroups $H_J^+(M)$,
$H_J^-(M)$ of the degree 2 real de Rham cohomology group $H^2(M,
\mathbb{R})$. These are the sets of cohomology classes which can be
represented by $J$-invariant, respectively, $J$-anti-invariant real
$2-$forms. In this note, it is shown that in dimension 4 these
subgroups induce a cohomology decomposition of $H^2(M, \mathbb{R})$.
This is a specifically 4-dimensional result, as it follows from a recent
work of Fino and Tomassini \cite{FT}. Some
estimates for the dimensions of these groups are also established
when the almost complex structure is tamed by a symplectic form and
an equivalent formulation for a question of Donaldson is given.
\end{abstract}

 \maketitle

\section{Introduction}
In this paper we continue to study differential forms on an almost
complex 4--manifold $(M, J)$ following \cite{LZ}. We are
particularly interested in the subgroups $H_J^+(M)$ and $H_J^-(M)$
of the degree 2 real de Rham cohomology group $H^2(M; \mathbb{R})$.
These are the sets of cohomology classes which can be represented by
$J$-invariant, respectively, $J$-anti-invariant real $2-$forms. The
goal pursued by defining these sub-groups is simple: understand the
effects of the action of the almost complex structure on forms at
the level of cohomology and introduce the idea of (real) cohomology
type, via the almost complex structure. Certainly, the subgroups
$H_J^{\pm}(M)$ and their dimensions $h^{\pm}_J$ are diffeomorphism
invariants of the almost complex manifold $(M,J)$. We would like to
show that these invariants appear to be interesting, particularly so
in dimension 4. Here is the outline of our paper.

\vspace{0.2cm}

% \noindent

Our first main result, Theorem \ref{pf-dim4} in section 2, shows
that on any compact almost complex 4-manifold the subgroups
$H_J^+(M)$ and $H_J^-(M)$ will induce a direct sum decomposition of
$H^2(M, \mathbb{R})$. With the terminology introduced in \cite{LZ},
Theorem \ref{pf-dim4} says that any almost complex structure on a
compact 4-dimensional manifold is $C^{\infty}$-pure and full. See
section 2 for precise definitions. Theorem \ref{pf-dim4} turns out to
be specifically a 4-dimensional result. Indeed, Example 3.3 of
Fino and Tomassini \cite{FT} shows the existence of a compact
6-dimensional nil-manifold with an almost complex structure which is
not $C^{\infty}$-pure (the intersection of $H_J^+(M)$ and $H_J^-(M)$
is non-empty). \footnote{We learned of the preprint \cite{FT} while
putting together the final form of our paper. There are further
interesting links between \cite{FT} and our paper (see further
comments in section 2). The overlap is minimal though.} Taking products
of this example with arbitrary almost complex manifolds, one obtains
examples in all dimensions $\geq 6$ of almost complex structures which are not
$C^{\infty}$-pure.

Also in section 2, for a compact 4-manifold with an {\bf integrable}
$J$, we show that subgroups $H_J^+(M)$ and $H_J^-(M)$ relate
naturally with the (complex) Dolbeault cohomology groups. We also
show that a complex type decomposition for cohomology does not hold
for non-integrable almost complex structures (see Lemma
\ref{Hitchin-lemma} and Corollary \ref{cxfull}).

\vspace{0.2cm}

%\noindent

%\noindent

In section 3 we focus on almost complex structures $J$ which admit
compatible or tame symplectic forms and we give estimates for the
dimensions $h^{\pm}_J$ in this case. If there are $J$-compatible
symplectic forms, then the collection of  cohomology classes of all
such forms, the so-called $J-$compatible cone, $\mathcal K_J^c(M)$,
is a subcone of $H^2(M;\mathbb R)$. In fact,
$$\mathcal K_J^c(M)\subset H_J^+(M)$$ as
 a (nonempty) open convex cone. Thus it is important
to determine  $h_J^+$ as well as $H_J^+(M)$. Moreover, it is shown
in \cite{LZ} that if $J$ is also $C^{\infty}$-full (the sum of
$H_J^+(M)$ and $H_J^-(M)$ is $H^2(M;\mathbb R)$), then
$$\mathcal
K_J^t(M)=H_J^-(M)+\mathcal K_J^c(M),$$ where $\mathcal K_J^t(M)$ is
the collection of cohomology classes of $J-$tamed symplectic forms.
Thus it is also important to understand the group $H_J^-(M)$.

Our investigation of almost complex structures which are tamed by
symplectic forms is also motivated by the following question
of Donaldson (\cite{D}).

\begin{question} \label{Donaldson}
If $J$ is an almost complex structure on a compact 4--manifold $M$
which is tamed by a symplectic form $\omega$, is there a symplectic
form compatible with $J$?
\end{question}

In \cite{LZ} it was shown that the question has an affirmative
answer when $J$ is integrable. For progress on a related problem 
proposed by Donaldson, the symplectic Calabi-Yau equation, and its relation to
Question \ref{Donaldson}, the reader is referred to \cite{D}, \cite{W}, \cite{TWY}, \cite{TW}. 

We observe in Theorem \ref{tameest} that an estimate on $h_J^+$
which is immediate for compatible $J$'s can be carried
over to the case of tamed $J$'s as well. Section 3 ends with
an equivalent formulation of Donaldson's Question \ref{Donaldson}.

\vspace{0.2cm}
In a later paper \cite{DLZ2} we will further study the group $H^-_J$.
%On the one hand,
%we provide evidence that it  is generically trivial; on the other hand, we demonstrate that there are a number of situations
%where it is non-trivial.

\vspace{0.2cm}

We appreciate  V. Apostolov for his very useful comments,  R. Hind,
T. Perutz for their interest, A. Fino and A. Tomassini for
sending us their paper \cite{FT}, and NSF for the partial support.
We also thank the referees for their careful reading of the manuscript
and useful remarks.

 {\bf Convention:} The  groups indexed by $(p,q)$
arise from {\it complex} differential forms. The groups indexed by
$\pm$ arise from {\it real} differential forms.

\section{Cohomology decomposition of almost complex 4--manifolds}

%\subsection{The cohomology associated to $J$}
\subsection{The groups  $H_J^{\pm}$}
%\begin{definition} Let $\mathcal Z_J^{p,q}$ be the space of real closed
% $(p,q)$ forms, and $\mathcal B_J^{p,q}$ be the space
%of real exact  $(p,q)$ forms. More generally, if $S$ is a finite set
%of pairs of integers, we set $\mathcal Z_J^S=\oplus_{(p,q)\in S}
%\mathcal _J^{p,q}$ and
%\begin{equation}H_J^S(M)_\mathbb{R}=\{[\alpha]|\alpha\in \mathcal
%Z_J^S\}=\frac{\mathcal Z_J^S}{\mathcal B}.\end{equation} We
%similarly define $H_J^S(M)_\mathbb{R}$.
% \end{definition}
% Clearly, $H_J^S(M)_\mathbb{R}\subset H^*(M;\mathbb R)$.
%In particular, $H^2(M;\mathbb R)\supset
%H_J^{1,1}(M)_\mathbb{R}+H_J^{(2,0), (0,2)}(M)_\mathbb{R}$.

%In general we do not have the type decomposition for the cohomology
%of  forms, i.e  $H^k(M;\mathbb
%R)=\oplus_{p+q=k}H_J^{p,q}(M)_\mathbb{R}$ does not hold.

Let $M$ be a compact $2n$-dimensional manifold and suppose $J$ is an
almost complex structure on $M$. $J$ acts on the bundle of real
2-forms $\Lambda^2$ as an involution, by $\alpha(\cdot, \cdot)
\rightarrow \alpha(J\cdot, J\cdot)$,  thus we have the splitting,
\begin{equation} \label{formtype}
\Lambda^2=\Lambda_J^+\oplus \Lambda_J^-.
\end{equation}
We will denote by $\Omega^2$ the space of 2-forms on $M$
($C^{\infty}$-sections of the bundle $\Lambda^2$) , $\Omega_J^+$ the
space of $J$-invariant 2-forms, etc. For any $\alpha \in \Omega^2$,
the $J$-invariant (resp. $J$-anti-invariant) component of $\alpha$
with respect to the decomposition (\ref{formtype}) will be denoted
by $\alpha '$ (resp. $\alpha ''$).

%%At the level of forms we have
%%\begin{equation} \label{split1}
%%\Omega^2=\Omega_J^+\oplus \Omega_J^-.
%%\end{equation}

\begin{definition} (\cite{LZ})
Let $ \mathcal Z^2$ denote the space of closed 2-forms on $M$ and let
$\mathcal Z_J^{\pm} = \mathcal Z^2 \cap \Omega_J^{\pm}$.  Define
\begin{equation}
H_J^{\pm}(M)=\{ \mathfrak{a} \in H^2(M;\mathbb R) | \exists \; \alpha\in \mathcal
Z_J^{\pm} \mbox{ such that } [\alpha] = \mathfrak{a} \}.
\end{equation}
\end{definition}

%%Note that $$\frac{\mathcal Z_J^{\pm}}{\mathcal B}=\frac{\mathcal
%%Z_J^{\pm}}{\mathcal B_J^{\pm}}.$$
%%
%%Thus
%%$$\frac{\mathcal Z}{\mathcal B} \supset \frac{\mathcal
%%Z_J^{+}}{\mathcal B_J^{+}}+ \frac{\mathcal Z_J^{-}}{\mathcal
%%B_J^{-}}.$$

\subsection{The type decomposition of $H^2(M;\mathbb R)$}
\noindent Obviously, $$H_J^{+}(M)+H_J^{-}(M) \subseteq H^2(M;\mathbb
R) ,$$ but if $J$ is not integrable, it is not clear whether
equality holds and whether the intersection of the two subspaces is
trivial. Thus the following definitions were also introduced in
\cite{LZ}:
\begin{definition}
(i) $J$ is said to be $C^{\infty}$-pure if $H_J^+\cap H_J^-=0$;
\newline (ii) $J$ is said to be $C^{\infty}$-full if $H^2(M;\mathbb
R)=H_J^+(M)+H_J^-(M)$.
\end{definition}

\vspace{0.2cm}

\noindent {\bf Note:} The terms {\it pure} and {\it full} almost
complex structures were also defined in \cite{LZ} in terms of
currents. We will not use these in this paper, so we refer the
reader to \cite{LZ} and \cite{FT} for more on this. Note also that
the paper of Fino and Tomassini provides a number of interesting
cases when the notions of pure and full almost complex structures
are equivalent to the $C^{\infty}$ counterparts (Theorem 3.7 and
Theorem 4.1 in \cite{FT}). See also Remark \ref{pf} below.

\vspace{0.2cm}

\noindent Our first result is
\begin{theorem} \label{pf-dim4}
If $M$ is a compact 4-dimensional manifold then any almost complex
structure $J$ on $M$ is $C^{\infty}$-pure and full. Thus, there is a
direct sum cohomology decomposition
\begin{equation}\label{type}
H^2(M;\mathbb R)=H_J^{+}(M) \oplus H_J^{-}(M).
\end{equation}
\end{theorem}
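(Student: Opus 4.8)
The plan is to fix a $J$-compatible Riemannian metric $g$ (these always exist) and run everything through Hodge theory, exploiting the special interaction between the $J$-type splitting $\Lambda^2=\Lambda_J^+\oplus\Lambda_J^-$ and the self-dual/anti-self-dual splitting $\Lambda^2=\Lambda^+\oplus\Lambda^-$ determined by $*$ in dimension $4$. The pointwise linear algebra I would record first is that $\Lambda_J^-\subseteq\Lambda^+$ (every $J$-anti-invariant $2$-form is self-dual), that $\Lambda^-\subseteq\Lambda_J^+$ (every anti-self-dual form is $J$-invariant), and that $\Lambda^+=\mathbb{R}\omega\oplus\Lambda_J^-$, where $\omega=g(J\cdot,\cdot)$. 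The decisive consequence is this: if $\mu$ is $J$-anti-invariant then $*\mu=\mu$, so $d^*\mu=-*d*\mu=-*d\mu$; hence a closed anti-invariant form is automatically coclosed, i.e. harmonic. Thus $\mathcal Z_J^-=\mathcal H_J^-$, the space of harmonic anti-invariant forms, and these sit inside the self-dual harmonic space $\mathcal H^+$. In particular $H_J^-\cong\mathcal H_J^-$ (harmonic representatives are unique), and on $H_J^-$ the cup product $Q([\gamma],[\gamma])=\int_M\gamma\wedge*\gamma=\|\gamma\|_{L^2}^2$ is positive definite.

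For $C^\infty$-purity I would take $\mathfrak a\in H_J^+\cap H_J^-$, represent it by a harmonic self-dual anti-invariant $\gamma\in\mathcal Z_J^-$ and simultaneously by a closed invariant $\alpha\in\mathcal Z_J^+$. Then $Q(\mathfrak a,\mathfrak a)=\int_M\gamma\wedge\alpha$. Pointwise $\gamma\wedge\alpha^-=0$ because $\gamma$ is self-dual and $\alpha^-$ anti-self-dual, while the self-dual part of the invariant form $\alpha$ is a multiple $f\omega$ and $\gamma\wedge\omega=0$ (as $\gamma\perp\omega$ inside $\Lambda^+$). Hence $Q(\mathfrak a,\mathfrak a)=0$; but $Q(\mathfrak a,\mathfrak a)=\|\gamma\|_{L^2}^2$ by the previous paragraph, so $\gamma=0$ and $\mathfrak a=0$.

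For $C^\infty$-fullness I would reduce to a single key claim. Since $\mathcal H^-\subseteq\Omega_J^+$ consists of closed invariant forms, $[\mathcal H^-]\subseteq H_J^+$ and contributes $b^-$ to $h_J^+$. On the self-dual side, let $K=(\mathcal H_J^-)^{\perp}\cap\mathcal H^+$, the $L^2$-orthogonal complement of $\mathcal H_J^-$ inside $\mathcal H^+$, of dimension $b^+-h_J^-$. The key claim is: if $h^+\in\mathcal H^+$ is $L^2$-orthogonal to every closed anti-invariant form, then $[h^+]\in H_J^+$. Granting this, $[K]\subseteq H_J^+$; since harmonic representatives are unique, $[\mathcal H^-]$ and $[K]$ are independent in $H^2$, so $h_J^+\ge b^-+(b^+-h_J^-)=b_2-h_J^-$, and with purity (forcing the sum to be direct) this yields $h_J^++h_J^-=b_2$, which is fullness. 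To prove the claim, write $h^+=f\omega+\gamma$ with $\gamma=(h^+)''$ its anti-invariant part; the hypothesis gives $\gamma\perp_{L^2}\mathcal Z_J^-$. It suffices to solve $(d\beta)''=-\gamma$ for some $\beta\in\Omega^1$, for then $h^++d\beta$ is a closed $J$-invariant form cohomologous to $h^+$. Equivalently I must show $\gamma\in\operatorname{im}(P_J^-d)$, where $P_J^-$ denotes orthogonal projection onto $\Omega_J^-$.

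The main obstacle is exactly this last solvability statement, which is the functional-analytic heart of the theorem. I would treat $T=P_J^-d\colon\Omega^1\to\Omega_J^-$ as a first-order operator with formal adjoint $T^*=d^*|_{\Omega_J^-}$. The self-duality of anti-invariant forms identifies $\ker T^*=\{\mu\in\Omega_J^-:d^*\mu=0\}=\{\mu\in\Omega_J^-:d\mu=0\}=\mathcal Z_J^-$, a finite-dimensional space. The delicate point is establishing the $L^2$-orthogonal Hodge-type decomposition $\Omega_J^-=\mathcal Z_J^-\oplus\operatorname{im}(T)$ with closed range; this is where compactness and elliptic estimates enter, and it is the step I expect to require the most care (one must verify that the relevant second-order operator controlling $T$ has closed range despite $T$ not itself being part of an obvious elliptic complex). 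Once this decomposition is in hand, the orthogonality $\gamma\perp\mathcal Z_J^-=\ker T^*$ places $\gamma$ in $\operatorname{im}(T)$, completing the key claim and hence the proof.
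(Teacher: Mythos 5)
Your purity argument is complete and is essentially the paper's: both rest on the pointwise inclusion $\Lambda_J^-\subset\Lambda_g^+$, which makes every closed $J$-anti-invariant form a self-dual harmonic form, so that the cup square of a class in $H_J^+\cap H_J^-$ is simultaneously $0$ and an $L^2$-norm. For fullness, however, your proof as written has a gap at exactly the place you flag: everything reduces to solving $(d\beta)''=-\gamma$ when $\gamma\perp_{L^2}\mathcal Z_J^-$, i.e.\ to the closed-range decomposition $\Omega_J^-=\mathcal Z_J^-\oplus\operatorname{im}(T)$ for $T=P_J^-d$, and you assert this rather than prove it. The good news is that the step is true and fillable: the second-order operator $TT^*=P_J^-\,d\,d^*$ acting on sections of the rank-$2$ bundle $\Lambda_J^-$ is elliptic, because for $0\neq\xi\in T_x^*M$ and $0\neq\mu\in\Lambda_J^-\subset\Lambda_g^+$ one has $\langle\sigma_{TT^*}(\xi)\mu,\mu\rangle=|\iota_{\xi^\sharp}\mu|^2=\tfrac12|\xi|^2|\mu|^2>0$, since a nonzero self-dual $2$-form is nondegenerate; hence the symbol is a positive-definite endomorphism of $\Lambda_J^-$. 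Standard elliptic theory then gives $\Omega_J^-=\ker(TT^*)\oplus\operatorname{im}(TT^*)$ with $\ker(TT^*)=\ker T^*=\{\mu\in\Omega_J^-:d^*\mu=0\}=\mathcal Z_J^-$ (self-duality converts $d^*\mu=0$ into $d\mu=0$, as you note) and $\operatorname{im}(TT^*)=\operatorname{im}(T)=(\mathcal Z_J^-)^\perp$. With that supplied, your dimension count $h_J^+\ge b^-+(b^+-h_J^-)$ together with purity closes the argument. Until it is supplied, the functional-analytic heart of the theorem is missing.

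It is worth contrasting this with the paper's route, which avoids elliptic theory for any non-standard operator. There, if a class $\mathfrak a$ is cup-orthogonal to $H_J^++H_J^-$, it may be taken to lie in $\mathcal H_g^+$ with harmonic representative $\alpha$; setting $f=\langle\alpha,\omega\rangle$, one applies Lemma \ref{hodgedim4} to the self-dual form $f\omega$ to produce the explicit closed $J$-invariant form $f\omega+2((f\omega)^{exact})_g^-$, whose pairing with $\alpha$ is $\int_M f^2\,dV\neq0$ --- a contradiction. The only input is the elementary observation that for a self-dual form the $d\theta$ and $\delta\Psi$ pieces of the Hodge decomposition are interchanged by $*$, so adding twice the anti-self-dual part of the exact piece yields a closed form without changing the self-dual (hence $J$-invariant) part. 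Your approach, once repaired, proves the sharper statement that every harmonic self-dual form $L^2$-orthogonal to $\mathcal Z_J^-$ already lies in $H_J^+$, which is more constructive; the paper's duality argument is shorter and needs only ordinary Hodge theory for $d$.
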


\vspace{0.2cm} \noindent Before the proof, we should set some more
preliminaries and notations. The particularity of dimension 4 is
that the Hodge operator $*_g$ of a Riemannian metric $g$ on $M$ also
acts as an involution on $\Lambda^2$. Thus, we have the well-known
self-dual, anti-self-dual splitting of the bundle of 2-forms,
\begin{equation} \label{sdasd}
\Lambda^2=\Lambda_g^+\oplus \Lambda_g^-.
\end{equation}
We will denote by $\Omega_g^{\pm}$ the space of sections of
$\Lambda_g^{\pm}$ and by $\alpha^+$, $\alpha^-$ the self-dual,
anti-self-dual components of a 2-form $\alpha$. Since the
Hodge-de Rham Laplacian commutes with $*_g$, the decomposition
(\ref{sdasd}) holds for the space of harmonic 2-forms $\mathcal H_g$
as well. By Riemannian Hodge theory, we get the metric induced
cohomology decomposition
\begin{equation} \label{hsdasd}
H^2(M;\mathbb R)= \mathcal H_g=\mathcal H_g^+\oplus \mathcal H_g^- .
\end{equation}
As in Definition 2.1, one can define
$$ H_g^{\pm}(M)=\{ \mathfrak{a} \in H^2(M;\mathbb R) | \exists \; \alpha\in \mathcal
Z_g^{\pm} \mbox{ such that } [\alpha] = \mathfrak{a} \}. $$ Of
course, $\mathcal Z_g^{\pm} := \mathcal Z^2 \cap \Omega_g^{\pm} = \mathcal H_g^{\pm}$,
so clearly $H_g^{\pm}(M)= \mathcal H_g^{\pm}$, and (\ref{hsdasd})
can be written as
$$ H^2(M;\mathbb R)= H^+_g \oplus H^-_g .$$

\vspace{0.2cm}

\noindent We will need the following special feature of the Hodge
decomposition in dimension 4.
\begin{lemma} \label{hodgedim4}
If $\alpha \in \Omega^+_g$ and $\alpha = \alpha_h + d\theta + \delta
\Psi$ is its Hodge decomposition, then $(d \theta)_g^+ = (\delta
\Psi)_g^+$ and $(d \theta)_g^- = - (\delta \Psi)_g^-$. In
particular, the $2-$form $$\alpha - 2(d\theta)_g^+ = \alpha_h$$ is
harmonic and the $2-$form
$$\alpha + 2(d\theta)_g^- = \alpha_h + 2 d\theta$$ is closed.
\end{lemma}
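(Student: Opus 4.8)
The plan is to apply the Hodge star $*_g$ to the Hodge decomposition $\alpha = \alpha_h + d\theta + \delta\Psi$ and then compare with the original decomposition, using its uniqueness. The structural input I would isolate first is the behavior of $*_g$ on the three summands in dimension $4$: it preserves harmonic forms (since it commutes with the Hodge--deRham Laplacian), and it interchanges exact and co-exact $2$-forms. The latter follows from the intertwining relations $*_g d = \pm \delta *_g$ and $*_g \delta = \pm d *_g$: if $\theta$ is a $1$-form then $*_g(d\theta) = \pm\delta(*_g\theta)$ is co-exact, and if $\Psi$ is a $3$-form then $*_g(\delta\Psi) = \pm d(*_g\Psi)$ is exact. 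The precise signs are irrelevant for this step, since I only need to know into which summand each term lands.

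With this in hand, I would write $\alpha = *_g\alpha = *_g\alpha_h + *_g(d\theta) + *_g(\delta\Psi)$, using $*_g\alpha = \alpha$. By the previous paragraph the right-hand side is again an orthogonal decomposition of $\alpha$ into a harmonic, a co-exact, and an exact part. Matching it against $\alpha = \alpha_h + d\theta + \delta\Psi$ and invoking the uniqueness of the Hodge decomposition forces $\alpha_h = *_g\alpha_h$ together with the two cross-identities $*_g(d\theta) = \delta\Psi$ and $*_g(\delta\Psi) = d\theta$. These cross-identities are the heart of the lemma.

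It then remains to take self-dual and anti-self-dual parts, using $\beta_g^\pm = \frac{1}{2}(\beta \pm *_g\beta)$ for a $2$-form $\beta$. Substituting the cross-identities gives $(d\theta)_g^+ = \frac{1}{2}(d\theta + \delta\Psi) = (\delta\Psi)_g^+$ and $(d\theta)_g^- = \frac{1}{2}(d\theta - \delta\Psi) = -(\delta\Psi)_g^-$, which are exactly the two claimed identities. The two concluding statements are then bookkeeping: adding the identities yields $d\theta + \delta\Psi = 2(d\theta)_g^+$, so $\alpha - 2(d\theta)_g^+ = \alpha_h$ is harmonic; and $\alpha + 2(d\theta)_g^- = \alpha_h + 2(d\theta)_g^+ + 2(d\theta)_g^- = \alpha_h + 2 d\theta$, which is closed.

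There is no serious analytic difficulty here; the lemma is a formal consequence of Hodge theory specific to the middle degree in dimension $4$, where $*_g$ is an involution. The only point needing genuine care is to invoke the \emph{full} uniqueness of the harmonic/exact/co-exact trichotomy rather than a weaker statement modulo exact forms — it is precisely this three-way orthogonal uniqueness that lets me extract the exact-versus-co-exact cross-identities, and hence the opposite-sign behavior of $(d\theta)_g^-$ and $(\delta\Psi)_g^-$, which would be invisible at the level of cohomology alone.
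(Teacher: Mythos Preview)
Your proof is correct and follows exactly the same approach as the paper: apply $*_g$ to the Hodge decomposition, use that $*_g\alpha=\alpha$ together with the uniqueness of the decomposition to obtain the cross-identities $*_g(d\theta)=\delta\Psi$ and $*_g(\delta\Psi)=d\theta$, and then read off the self-dual and anti-self-dual parts. The paper's proof is a one-line sketch of precisely this argument; you have simply filled in the details it leaves implicit.
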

\begin{proof} Since $*\alpha = \alpha$, by the uniqueness of the
Hodge decomposition, we have $*(d\theta) = \delta \Psi$, $*(\delta
\Psi) = d \theta$. The lemma follows.
\end{proof}

\begin{remark}
The decomposition $\alpha  = \alpha_h + 2(d\theta)_g^+$ for a self-dual form
$\alpha$ can also be seen as the Hodge decomposition for $\Omega^+_g$ associated to the elliptic
differential complex
$$ 0 \longrightarrow \Omega^0 \buildrel {d}\over\longrightarrow \Omega^1 \buildrel {d^+}\over\longrightarrow \Omega^+_g
\longrightarrow 0 .$$
%%where the second arrow is the usual differential
%%$d$, while the third arrow is the "self-dual" differential $d^+$,
%%i.e. the differential followed by the projection on the self-dual
%%part.
\end{remark}

%%Later, we may denote $d\theta$ in the Hodge decomposition by $\alpha^{exact}$.

Suppose now that $J$ is an almost complex structure and $g$ is a
$J$-compatible Riemannian metric on the 4-manifold $M$ in the sense
that $g$ is $J-$invariant, i.e.
$g(Ju, Jv)=g(u, v)$. The pair
$(g,J)$ defines a $J-$invariant 2--form $\omega$ by
\begin{equation}
\omega(u, v)=g(Ju, v).
\end{equation}
Such a triple $(J, g, \omega)$ is called an almost Hermitian
structure. An almost Hermitian structure $(J, g, \omega)$ is called
almost K\"ahler if $\omega$ is closed.

Given $J$, we can always choose a compatible $g$. The relations
between the decompositions (\ref{formtype}) and (\ref{sdasd}) on a
4-dimensional almost Hermitian manifold are
\begin{equation}\label{type-Jinv}
\Lambda_J^+=\underline{\mathbb R}(\omega)\oplus \Lambda_g^-,
\end{equation}
\begin{equation} \label{type-sdasd}
\Lambda_g^+ = \underline{\mathbb R}(\omega) \oplus \Lambda_J^-,
\end{equation}
\begin{equation}
\Lambda_J^+\cap \Lambda_g^+=\underline{\mathbb R}(\omega), \quad
\Lambda_J^-\cap \Lambda_g^-=0.
\end{equation}

The following lemma is an immediate consequence of (\ref{type-sdasd}):
\begin{lemma} \label{Janti-sd} Let $(M^4, g, J, \omega)$
be a 4-dimensional almost Hermitian manifold.
Then $\mathcal Z^-_J \subset \mathcal{H}^+_g$ and the natural map $\mathcal Z^-_J
\rightarrow H^-_J $ is bijective. More precisely, if
${\mathcal{H}}^{+, \omega^{\perp}}_g$ denotes the subspace of
harmonic self-dual forms point-wise orthogonal to $\omega$, we have
\begin{equation} \label{H-=Z-}
 H^-_J = \mathcal Z^-_J = {\mathcal{H}}^{+, \omega^{\perp}}_g.
\end{equation}
In particular, any closed, J-anti-invariant form $\alpha$ ($\alpha \not\equiv 0$ ) is non-degenerate
on an open dense subset $M' \subseteq M$.
\end{lemma}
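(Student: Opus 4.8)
The plan is to prove the chain of identities $H^-_J = Z^-_J = \mathcal{H}^{+,\omega^\perp}_g$ by a double inclusion, then read off bijectivity of the natural map from Hodge theory, and finally treat the non-degeneracy claim separately. First I would establish $Z^-_J \subseteq \mathcal{H}^{+,\omega^\perp}_g$. The splitting (\ref{type-sdasd}) gives $\Lambda_J^- \subset \Lambda_g^+$, so any $\alpha \in Z^-_J$ is self-dual, $*\alpha = \alpha$. Since $\alpha$ is closed, $\delta\alpha = -*d*\alpha = -*d\alpha = 0$, so $\alpha$ is both closed and co-closed, hence harmonic and self-dual, i.e. $\alpha \in \mathcal{H}^+_g$. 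Because $\omega$ is $J$-invariant while $\alpha$ is $J$-anti-invariant and the splitting $\Lambda^2 = \Lambda_J^+ \oplus \Lambda_J^-$ is pointwise orthogonal, $\alpha$ is pointwise orthogonal to $\omega$, giving $\alpha \in \mathcal{H}^{+,\omega^\perp}_g$.

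For the reverse inclusion $\mathcal{H}^{+,\omega^\perp}_g \subseteq Z^-_J$, I would take a harmonic self-dual $\beta$ with $\beta \perp \omega$ pointwise. Harmonicity gives $d\beta = 0$, and self-duality places $\beta$ in $\Lambda_g^+ = \underline{\mathbb R}(\omega) \oplus \Lambda_J^-$ by (\ref{type-sdasd}), so $\beta = f\omega + \gamma$ with $f$ a function and $\gamma \in \Omega_J^-$. Since $\gamma$ is pointwise orthogonal to $\omega$, the hypothesis $\beta \perp \omega$ forces $f|\omega|^2 = 0$, whence $f \equiv 0$ and $\beta = \gamma \in \Omega_J^-$. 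Thus $\beta$ is closed and $J$-anti-invariant, i.e. $\beta \in Z^-_J$, completing $Z^-_J = \mathcal{H}^{+,\omega^\perp}_g$. The natural map $Z^-_J \to H^-_J$ is then surjective by the very definition of $H^-_J$, and injective because an exact element of $Z^-_J$ is a harmonic exact form: writing $\alpha = d\eta$ with $\delta\alpha = 0$ gives $\|\alpha\|^2 = \langle d\eta, \alpha\rangle = \langle \eta, \delta\alpha\rangle = 0$, so $\alpha = 0$. Hence the map is bijective and $H^-_J = Z^-_J$.

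For the final, non-degeneracy assertion, I would exploit self-duality directly: for $*\alpha = \alpha$ one has $\alpha \wedge \alpha = \alpha \wedge *\alpha = |\alpha|^2\,\mathrm{vol}_g$. In dimension $4$ a $2$-form is non-degenerate exactly when $\alpha \wedge \alpha \neq 0$, so $\alpha$ is non-degenerate at precisely those points where $\alpha$ does not vanish. The set $M' = \{x \in M : \alpha_x \neq 0\}$ is therefore open, and it remains to see it is dense. This is where the only genuinely non-elementary input enters: since $\alpha$ is harmonic and $\alpha \not\equiv 0$, Aronszajn's unique continuation theorem for the elliptic equation $\Delta\alpha = 0$ shows that $\alpha$ cannot vanish on any open set, so the zero locus of $\alpha$ has empty interior and $M'$ is open and dense.

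I expect every step except the last to be pure linear algebra from the splittings (\ref{type-Jinv})--(\ref{type-sdasd}) combined with standard Hodge theory; the main obstacle is the analytic unique continuation statement needed to upgrade ``non-degenerate where $\alpha \neq 0$'' to ``non-degenerate on an open \emph{dense} subset,'' which relies on the harmonicity established in the first step and (implicitly) on the connectedness of $M$.
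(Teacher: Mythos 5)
Your proposal is correct and follows essentially the same route as the paper: the inclusion $\Lambda_J^- \subset \Lambda_g^+$ from (\ref{type-sdasd}) to get self-dual harmonicity, the absence of nontrivial exact $J$-anti-invariant forms for bijectivity, and unique continuation for harmonic forms combined with non-degeneracy of nonvanishing self-dual forms for the last claim. You merely spell out in more detail the reverse inclusion $\mathcal{H}^{+,\omega^{\perp}}_g \subseteq Z^-_J$, which the paper dismisses as ``obvious,'' and you rightly flag the implicit connectedness needed for density.
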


\begin{proof} Since $\Lambda^-_J \subset \Lambda^+_g$, a closed
$J$-anti-invariant 2-form is a self-dual harmonic form. In
particular, there exists no non-trivial exact $J$-anti-invariant
2-form. Thus, the natural map $\mathcal Z^-_J \rightarrow H^-_J $ is
bijective. The equality (identification) (\ref{H-=Z-}) is obvious.
For the last statement, note that any self-dual form is non-degenerate
on the complement of its nodal set $M' = M \setminus \alpha^{-1}(0)$.
On the other hand, any harmonic form satisfies the unique continuation property, so if
$\alpha \not\equiv 0$, its nodal set $\alpha^{-1}(0)$ has empty interior. In
fact, from \cite{bar} it is known more: $\alpha^{-1}(0)$ has Hausdorff dimension $\leq 2$.
\end{proof}

\noindent We are now ready to give the proof of Theorem
\ref{pf-dim4}
%%Equivalently,
%%$${\mathcal{H}}^{+, \omega^{\perp}}_g = \{ \psi \in \mathcal{H}^+_g
%%| (\psi \wedge \omega)(p) = 0, \; \forall p \in M \}.$$

\vspace{0.2cm}

{\it Proof of Theorem \ref{pf-dim4}.} Let $g$ be a $J$-compatible
Riemannian metric and let $\omega$ be the 2-form defined by $(g,J)$.
We start by proving that $J$ is $C^{\infty}$-pure. If $ \mathfrak{a}
\in H_J^+\cap H_J^-$, let $\alpha '\in \mathcal Z_J^{+}, \; \alpha ''\in
\mathcal Z_J^{-}$, be representatives for $\mathfrak{a}$. Then
$$\mathfrak{a}\cup \mathfrak{a} = \int_M \alpha '\wedge \alpha ''=0 ,$$
but by Lemma \ref{Janti-sd}, we also have
$$\mathfrak{a}\cup \mathfrak{a} = \int_M \alpha ''\wedge \alpha '' = \int_M
|\alpha ''|_g^2 \; d \mu_g .$$ Thus $\alpha '' = 0$, so
$\mathfrak{a} = 0$.

Next we prove that $J$ is $C^{\infty}$-full. Suppose the contrary.
Then there exists a class $ \mathfrak{a} \in H^2(M;\mathbb R)$ which
is (cup product) orthogonal to $H^+_J \oplus H^-_J$. Since $H^-_g
\subset H^+_J$, we can assume $ \mathfrak{a} \in H^+_g$. Let
$\alpha$ be the harmonic, self-dual representative of $
\mathfrak{a}$ and denote $f = <\alpha, \omega>$. The function $f$ is
not identically zero, as otherwise it follows from Lemma
\ref{Janti-sd} that $ \mathfrak{a} \in H^-_J$. Now we apply  Lemma
\ref{hodgedim4} to the self-dual form $f\omega$. The closed form
$(f\omega)_h + 2(f\omega)^{exact}$ is also $J$-invariant; indeed, it
is equal to $f \omega + 2((f \omega)^{exact})_g^{-}$. (Here and
later, we shall denote $\alpha^{exact}$ the exact part from the
Hodge decomposition of a form $\alpha$.) Thus $(f\omega)_h +
2(f\omega)^{exact}$ is a representative for a class $ \mathfrak{b}
\in H^+_J$. But
\begin{equation} \nonumber \begin{split} \mathfrak{a} \cup \mathfrak{b} &= \int_M <\alpha, (f\omega)_h + 2(f\omega)^{exact}>
\; d\mu_g \\
 &= \int_M <\alpha, f \omega + 2((f\omega)^{exact})_g^{-}> \;d\mu_g = \int_M f^2 \; d\mu_g \neq 0.\end{split}
\end{equation}
This contradicts the assumption that $ \mathfrak{a}
$ is orthogonal to $H^+_J \oplus H^-_J$. $\Box$

\vspace{0.2cm}

\begin{remark} \label{pf}
(i) Combining Theorem \ref{pf-dim4} with Theorem 3.7 from \cite{FT},
it follows that any almost complex structure on a compact
4-dimensional manifold is not just pure and full for forms, but for currents as well.

\vspace{0.1cm}

\noindent (ii) Theorem \ref{pf-dim4} does not generalize to higher dimensions.
A 6-dimensional almost complex manifold which is not $C^{\infty}$-pure is given
in Example 3.3 of \cite{FT}. Higher dimensional examples can be obtained from the following simple observation:
if $(M_1, J_1), (M_2, J_2)$ are almost complex manifolds and one of them is not $C^{\infty}$-pure, then
$(M_1 \times M_2, J_1 \oplus J_2)$ is not $C^{\infty}$-pure either.
\end{remark}

By contrast, note the following result (also proved in \cite{FT}, Proposition 3.2).

\begin{prop}
If $J$ is an almost complex structure on a compact manifold $M^{2n}$
and $J$ admits a compatible symplectic structure, then $J$ is
$C^{\infty}$-pure.
%%(We call almost K\"ahler $J$ an almost complex structure which
%%admits a compatible symplectic structure.)
\end{prop}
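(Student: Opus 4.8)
The plan is to adapt the cup-product argument from the proof of Theorem \ref{pf-dim4} to arbitrary dimension $2n$, with the compatible symplectic form playing a double role: it supplies a closed form $\omega^{n-2}$ with which to push wedge products up to top degree, and, through its associated compatible metric, the positivity needed at the end. Fix a $J$-compatible symplectic form $\omega$ and let $g$ be the associated Riemannian metric, $\omega(\cdot,\cdot)=g(J\cdot,\cdot)$, with Hodge star $*_g$ and volume form $dV$. Suppose $\mathfrak a\in H_J^+\cap H_J^-$ and choose a closed $J$-invariant representative $\alpha'\in\mathcal Z_J^+$ and a closed $J$-anti-invariant representative $\alpha''\in\mathcal Z_J^-$, both of class $\mathfrak a$. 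Since $\alpha',\alpha''$ are closed with $[\alpha']=[\alpha'']$ and $\omega^{n-2}$ is closed, Stokes' theorem gives
\[\int_M\alpha'\wedge\alpha''\wedge\omega^{n-2}=\int_M\alpha''\wedge\alpha''\wedge\omega^{n-2}.\]
The goal is to show the right-hand side forces $\alpha''\equiv0$, hence $\mathfrak a=0$.

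First I would show the left-hand integrand vanishes identically by a type count. Recall that $J$-invariant $2$-forms are exactly the real $(1,1)$-forms and $J$-anti-invariant $2$-forms the real $(2,0)+(0,2)$-forms, while $\omega^{n-2}$ is of type $(n-2,n-2)$. Hence $\alpha'\wedge\alpha''\wedge\omega^{n-2}$ is a sum of forms of type $(n+1,n-1)$ and $(n-1,n+1)$; since a $2n$-form has only an $(n,n)$-component, it vanishes pointwise and the left-hand side above is $0$.

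The heart of the argument — and the step I expect to require the most care — is the pointwise positivity on the right. Writing $\alpha''=\beta+\bar\beta$ with $\beta$ of type $(2,0)$, any such $\beta$ is primitive (contraction with $\omega$ lowers type by $(1,1)$ and there is no type $(1,-1)$), so $\alpha''$ is a primitive $2$-form. The Weil identity for primitive forms then yields the pointwise relation $\omega^{n-2}\wedge\alpha''=(n-2)!\,*_g\alpha''$ (for $n\ge2$; when $n=1$ one has $\Lambda_J^-=0$ and the statement is vacuous). Consequently
\[\int_M\alpha''\wedge\alpha''\wedge\omega^{n-2}=(n-2)!\int_M\alpha''\wedge*_g\alpha''=(n-2)!\int_M|\alpha''|_g^2\,dV\ge0,\]
with equality if and only if $\alpha''\equiv0$. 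Combined with the vanishing of the left-hand side, this forces $\alpha''=0$, so $\mathfrak a=0$ and $J$ is $C^\infty$-pure.

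The delicate point is the sign of the constant in $\omega^{n-2}\wedge\alpha''=(n-2)!\,*_g\alpha''$: the final integral has a definite sign only if it is genuinely positive. I would confirm this either directly from the Weil--Lefschetz formula for primitive $(p,q)$-forms (tracking the factor $i^{p-q}=i^{\pm2}=-1$, which cancels the sign $(-1)^{k(k+1)/2}=-1$ coming from $k=2$), or by a normalizing pointwise computation in a unitary coframe. A useful consistency check is the case $n=2$: there the identity reduces to $\alpha''=*_g\alpha''$, i.e. to the self-duality of $J$-anti-invariant $2$-forms recorded in Lemma \ref{Janti-sd}, which reproduces exactly the computation behind Theorem \ref{pf-dim4}.
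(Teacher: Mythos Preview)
Your proof is correct and follows essentially the same route as the paper: both compute the cup product $\mathfrak a\cup\mathfrak a\cup[\omega]^{n-2}$ two ways, using the pointwise type vanishing of $\alpha'\wedge\alpha''\wedge\omega^{n-2}$ on one side and the Hodge-star identity $\alpha''\wedge\omega^{n-2}=c\,*_g\alpha''$ (your Weil--Lefschetz formula, the paper's equation \eqref{*Janti}) on the other. The only cosmetic differences are that the paper states \eqref{*Janti} without the factorial (a normalization issue irrelevant to the sign), and that the paper pauses to note the consequence $Z_J^-\subset\mathcal H_g^2$ before running the cup-product computation, whereas you go directly to the integral.
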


\begin{proof} On any almost Hermitian manifold $(M^{2n}, g, J ,
\omega)$, if $\alpha \in \Omega^-_J$, then
\begin{equation} \label{*Janti}
*_g(\alpha) =  \alpha \wedge \omega^{n-2}.
\end{equation}
Thus, if $\omega$ is symplectic and $\alpha$ is closed,
(\ref{*Janti}) implies that $*_g(\alpha)$ is also closed. Hence, for
any almost K\"ahler structure $(g,J,\omega)$, $\mathcal Z^-_J \subset
\mathcal{H}^2_g$. It is straightforward now to generalize the first
part of the proof of Theorem \ref{pf-dim4}. Let $ \mathfrak{a} \in
H_J^+\cap H_J^-$, and let $\alpha '\in \mathcal Z_J^{+}, \; \alpha ''\in
\mathcal Z_J^{-}$, be representatives for $\mathfrak{a}$. Then
$$\mathfrak{a}\cup \mathfrak{a} \cup [\omega]^{n-2} =
\int_M \alpha '\wedge \alpha '' \wedge \omega^{n-2} =0 ,$$
but by (\ref{*Janti}) we also have
$$\mathfrak{a}\cup \mathfrak{a} \cup [\omega]^{n-2}=
\int_M \alpha ''\wedge \alpha '' \wedge \omega^{n-2} = \int_M
|\alpha ''|_g^2 \; d \mu_g .$$ Thus $\alpha '' = 0$, so
$\mathfrak{a} = 0$.
\end{proof}

%We end this subsection by noting that there may exist a duality
%between cohomology groups $H^{\pm}_J$ and the corresponding homology
%groups defined in terms of currents, see \cite{LZ}.

%Recall the complex of currents is the dual complex  of forms, and
%vice versa. Thus a current is closed if and only if it vanishes on
%the space $\mathbb B$ of forms which are boundaries,  and a form is
%closed if and only if it vanishes on the space $\mathcal B$ of
%currents which are boundaries.

%We call the homology groups of the complex of the currents the De
%Rham homology groups.
% The inclusion
%of smooth forms into the currents induces a natural isomorphism the
%$(2n-k)-$th de Rham cohomology group and the $k-$th de Rham homology
%group.  Thus each $k-$closed current is homologous to a smooth
%$(2n-k)-$form. Moreover, a current is a boundary if and only if it
%vanishes on the space $\mathbb Z$ of closed forms, and a smooth form
%is a boundary if and only if vanishes on $\mathcal B$ of closed
%currents (see \cite{DR}). \vspace{0.2cm}

\subsection{The complexified $H^2$}
\subsubsection{The groups $H_J^{p,q}$}
In all of the above, we referred to decompositions of {\it real}
2-forms. We present now the relation with the more familiar
splitting of bi-graded {\it complex} 2-forms:
\begin{equation} \label{complextype}
\Lambda^2_{\mathbb{C}} = \Lambda^{2,0}_J \oplus \Lambda^{1,1}_J
\oplus \Lambda^{0,2}_J
\end{equation}
The relation between the decompositions (\ref{formtype}) and
(\ref{complextype}) is well known:
\begin{equation} \label{realcxtype}\begin{array}{ll}
\Lambda_J^+&=(\Lambda_J^{1,1})_{\mathbb R}, \cr &\cr
\Lambda_J^-&=(\Lambda_J^{0,2}\oplus \Lambda_J^{2,0})_{\mathbb R}.\cr
\end{array}
\end{equation}
Note that the bundle $\Lambda^-_J$ inherits an almost complex
structure, still denoted $J$, by
$$\beta \in \Lambda^-_J \; \rightarrow \; J\beta \in \Lambda^-_J,
\mbox{ where } J\beta(X,Y) = -\beta(JX,Y) .$$

\begin{definition} Let $H_J^{p,q}$ be the subspace of the
complexified de Rham cohomology $H^2(M;\mathbb C)$, consisting of
classes which can be represented by a complex closed form of type
$(p,q)$.
\end{definition}

\begin{lemma} The groups $H_J^{p,q}$ have the following properties:
\begin{equation}\label{conjugation}
H_J^{p,q}=\overline{H_J^{q,p}},
\end{equation}
\begin{equation}\label{complexification} \begin{array}{ll}
H^{p,p}_{J}&= (H^{p,p}_J \cap H^{2p}(M;\mathbb R))\otimes \mathbb
C,\cr &\cr (H^{p,q}_{J}+H_J^{q,p})&= ((H^{p,q}_{J}+H_J^{q,p}) \cap
H^{p+q}(M;\mathbb R))\otimes \mathbb C.\end{array} \end{equation}
\end{lemma}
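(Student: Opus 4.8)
The plan is to derive both statements from the single observation that complex conjugation of forms interchanges the bidegree types. For \eqref{conjugation} I would start from the fact that if $\alpha$ is a complex $2$-form of type $(p,q)$, then its conjugate $\bar\alpha$ has type $(q,p)$, and that conjugation commutes with $d$; hence $\alpha$ closed of type $(p,q)$ forces $\bar\alpha$ to be closed of type $(q,p)$. Passing to cohomology, the induced conjugation on $H^2(M;\mathbb{C})$ carries $H_J^{p,q}$ into $H_J^{q,p}$, so $\overline{H_J^{p,q}}\subseteq H_J^{q,p}$; conjugating once more yields the reverse inclusion, and therefore the equality $H_J^{p,q}=\overline{H_J^{q,p}}$.

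For \eqref{complexification} I would first record that each $H_J^{p,q}$, and hence each sum $H_J^{p,q}+H_J^{q,p}$, is a complex-linear subspace of the relevant $H^{p+q}(M;\mathbb{C})$: closure under addition and complex scaling of representatives is immediate, since a $\mathbb{C}$-linear combination of closed forms of a fixed type is again a closed form of that type. The key structural input is that both $H^{p,p}_J$ and $H_J^{p,q}+H_J^{q,p}$ are invariant under conjugation. For $H^{p,p}_J$ this is \eqref{conjugation} with $q=p$, and for the sum it follows because conjugation merely swaps the two summands.

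The heart of the argument is then a general linear-algebra fact: if $W$ is a real vector space, $W\otimes\mathbb{C}$ its complexification with the standard conjugation, and $V\subseteq W\otimes\mathbb{C}$ a complex subspace with $\overline{V}=V$, then $V=(V\cap W)\otimes\mathbb{C}$. I would prove it by writing any $v\in V$ as $v=\frac{1}{2}(v+\bar v)+i\,\frac{1}{2i}(v-\bar v)$; since $V$ is conjugation-invariant and $\mathbb{C}$-linear, both $\frac{1}{2}(v+\bar v)$ and $\frac{1}{2i}(v-\bar v)$ lie in $V$, and each is fixed by conjugation, hence lies in $W$, so both lie in $V\cap W$ and $v\in(V\cap W)\otimes\mathbb{C}$; the reverse inclusion is clear. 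Applying this with $V=H^{p,p}_J$ and $W=H^{2p}(M;\mathbb{R})$ gives the first identity, and with $V=H_J^{p,q}+H_J^{q,p}$ and $W=H^{p+q}(M;\mathbb{R})$ the second.

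The only point demanding care is the transition from forms to cohomology: I must ensure that the real and imaginary parts produced above are genuinely represented by closed forms of the correct type. This is precisely what the combination of $\mathbb{C}$-linearity of the subspaces (closure of the type under $\mathbb{C}$-combinations of representatives) and their conjugation-invariance guarantees, so no further analytic input is needed; once \eqref{conjugation} is in hand the remainder of the lemma is purely formal.
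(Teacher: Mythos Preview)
Your proof is correct and follows essentially the same approach as the paper: both derive \eqref{conjugation} from the fact that conjugation commutes with $d$ and swaps bidegrees, and both deduce \eqref{complexification} from \eqref{conjugation} together with the general linear-algebra fact that a conjugation-invariant complex subspace of a complexification equals the complexification of its real part. The only difference is that the paper cites this linear-algebra fact from \cite{BPV}, whereas you supply a direct proof of it.
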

\begin{proof}  Relation \eqref{conjugation} follows from the fact 
that a complex form $\Psi$ is closed if and only if
 its conjugate  $\overline{\Psi}$ is closed. The equalities in \eqref{complexification} 
 follow from \eqref{conjugation} and  the
following fact: Let $V$ be a real vector space and $W$ a complex
subspace of $V\otimes_{\mathbb R}\mathbb C$, which as a subspace is
invariant under conjugation. Then $W$ is the complexification of
$W\cap V$ (see  Remark 2.5 on p. 139 in \cite{BPV}).
\end{proof}

We now investigate the relation between the groups $H^{\pm}_J$ and
$H_J^{p,q}$. As we shall see in Lemma \ref{Hitchin-lemma}, when $J$
is not integrable, there is an important difference compared to what
(\ref{realcxtype}) would have predicted:

\begin{lemma} For a compact almost complex manifold $(M,J)$ of any
dimension,
\begin{equation}\label{real11}
H_J^+ =H_J^{1,1}\cap H^2(M;\mathbb R),
\end{equation}
and
\begin{equation}\label{complex11}
H_J^{1,1}=H_J^+ \otimes_{\mathbb R}{\mathbb C}.
\end{equation}
\end{lemma}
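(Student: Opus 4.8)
The plan is to prove \eqref{real11} and \eqref{complex11} directly from the definitions, exploiting the relation \eqref{realcxtype} between the real and complex type decompositions of forms together with the conjugation symmetry \eqref{conjugation}. The statement is purely at the level of forms and cohomology, with no integrability assumption, so the key point is to track carefully the distinction between a form of a given pointwise type and a cohomology class representable by a closed form of that type.

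First I would prove the inclusion $H_J^+ \subseteq H_J^{1,1}\cap H^2(M;\mathbb R)$. If $\mathfrak{a}\in H_J^+$, then by definition there is a closed real $J$-invariant $2$-form $\alpha$ with $[\alpha]=\mathfrak{a}$. By \eqref{realcxtype}, $\Lambda_J^+=(\Lambda_J^{1,1})_{\mathbb R}$, so $\alpha$ is a closed form of type $(1,1)$; hence $\mathfrak{a}\in H_J^{1,1}$, and of course $\mathfrak{a}\in H^2(M;\mathbb R)$ since it was a real class to begin with. For the reverse inclusion, suppose $\mathfrak{a}\in H_J^{1,1}\cap H^2(M;\mathbb R)$. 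Then $\mathfrak{a}$ is represented by some closed complex form $\Psi$ of type $(1,1)$. Since $\mathfrak{a}$ is a real class, it is also represented by $\overline{\Psi}$, which is again closed and of type $(1,1)$ (a $(1,1)$-form is self-conjugate as a type). The trick is to pass to the real representative $\tfrac{1}{2}(\Psi+\overline{\Psi})$: this is a real, closed $2$-form of type $(1,1)$, hence lies in $\Omega_J^+$ by \eqref{realcxtype}, and it represents $\mathfrak{a}$ because $[\Psi]=[\overline{\Psi}]=\mathfrak{a}$. Thus $\mathfrak{a}\in H_J^+$, proving \eqref{real11}.

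For \eqref{complex11}, the cleanest route is to invoke the abstract linear-algebra fact already cited in the proof of \eqref{complexification}: if $W\subseteq V\otimes_{\mathbb R}\mathbb C$ is a complex subspace invariant under conjugation, then $W=(W\cap V)\otimes_{\mathbb R}\mathbb C$. Here I would take $V=H^2(M;\mathbb R)$ and $W=H_J^{1,1}$. Conjugation-invariance of $H_J^{1,1}$ follows from \eqref{conjugation}, since $\overline{H_J^{1,1}}=H_J^{1,1}$. Applying the fact gives $H_J^{1,1}=(H_J^{1,1}\cap H^2(M;\mathbb R))\otimes_{\mathbb R}\mathbb C$, and combining this with \eqref{real11} yields $H_J^{1,1}=H_J^+\otimes_{\mathbb R}\mathbb C$, which is \eqref{complex11}.

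The step I expect to require the most care is the reverse inclusion in \eqref{real11}, and specifically the passage from a complex $(1,1)$-representative to a real $J$-invariant one. The subtlety is that one must verify the symmetrized form $\tfrac12(\Psi+\overline{\Psi})$ genuinely lands in $\Omega_J^+$ rather than merely being real; this is where \eqref{realcxtype} does the essential work, since it identifies $\Lambda_J^+$ with the real points of $\Lambda_J^{1,1}$ pointwise, independently of integrability. Everything else is formal, and in particular \emph{no} Hodge theory or dimensional restriction is needed, consistent with the statement holding in all dimensions.
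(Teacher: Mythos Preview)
Your proof is correct and follows essentially the same approach as the paper: both deduce \eqref{complex11} from \eqref{real11} via the conjugation-invariance fact behind \eqref{complexification}, and both handle the nontrivial inclusion in \eqref{real11} by replacing a complex closed $(1,1)$-representative $\Psi$ of the real class by its real part $\tfrac{1}{2}(\Psi+\overline{\Psi})$. The paper writes this slightly more explicitly as $\rho=\sigma+d\tau$ with $\sigma$ real to make visible that $[\overline{\rho}]=[\rho]$, but your appeal to $\overline{\mathfrak{a}}=\mathfrak{a}$ is the same content.
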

\begin{proof}
The relation \eqref{complex11} is a consequence of \eqref{real11}
and \eqref{complexification} with $(p,p)=(1,1)$. So we just need to
prove \eqref{real11}.

The inclusion $ H_J^+ \subseteq H_J^{1,1}\cap H^2(M;\mathbb R)$ is
clear,  so we now prove  the converse inclusion.
 An element in $H^{1,1}_{J} \cap H^2(M; \mathbb
R)$ can be represented by a complex $d$ closed (1,1) form
$\rho=\sigma+d\tau$, with $\sigma$ a $d$ closed real form. So it is
also represented by the real $d$ closed (1,1) form
$\frac{1}{2}(\rho+\bar \rho)=\sigma+d(\tau+ \bar \tau)$.

When $J$ is integrable the same argument  appears in the proof of
Theorem 2.13 in \cite{BPV}.
\end{proof}

The next lemma is a well known result (see e.g. \cite{Sal}), recast in our terminology.
It can also be seen as a consequence and as a slight
extension of Hitchin's Lemma (\cite{Hi2}).

\begin{lemma} \label{Hitchin-lemma} Let $J$ be an almost complex structure
on a compact 4-manifold.
\begin{equation}\label{complex20}
(H_J^{2,0}+H_J^{0,2})=\begin{cases} H_J^-\otimes_{\mathbb R}{\mathbb
C},& \text{if $J$ is integrable}, \\
0,& \text{if $J$ is not integrable}.
\end{cases}
\end{equation}
In particular, if $J$ is integrable, then
\begin{equation}\label{real20} H_J^- =(H_J^{2,0}+H_J^{0,2})\cap H^2(M;\mathbb
R).
\end{equation}

\end{lemma}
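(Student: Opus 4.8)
The plan is to reduce everything to a statement about the \emph{real} points of $H_J^{2,0}+H_J^{0,2}$ and then contrast how $d$ interacts with bidegree in the two cases. Since $\overline{H_J^{2,0}}=H_J^{0,2}$ by \eqref{conjugation}, the subspace $H_J^{2,0}+H_J^{0,2}\subseteq H^2(M;\mathbb C)$ is invariant under conjugation, so by \eqref{complexification} it is the complexification of $(H_J^{2,0}+H_J^{0,2})\cap H^2(M;\mathbb R)$. Thus both assertions of \eqref{complex20} follow once I identify this real subspace: it should be $H_J^-$ in the integrable case and $\{0\}$ otherwise, with \eqref{real20} then immediate. One inclusion is cheap and independent of integrability: given a real class $\mathfrak a\in(H_J^{2,0}+H_J^{0,2})\cap H^2(M;\mathbb R)$, write $\mathfrak a=\beta+\gamma$ with $\beta\in H_J^{2,0}$, $\gamma\in H_J^{0,2}$, average $\mathfrak a$ with its conjugate, and use \eqref{conjugation} to produce a single closed $(2,0)$ form $\psi$ with $\mathfrak a=[\psi+\bar\psi]$. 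Since $\psi+\bar\psi$ is real, closed, and lies in $(\Lambda_J^{2,0}\oplus\Lambda_J^{0,2})_{\mathbb R}=\Lambda_J^-$ by \eqref{realcxtype}, this gives $\mathfrak a\in H_J^-$, so $(H_J^{2,0}+H_J^{0,2})\cap H^2(M;\mathbb R)\subseteq H_J^-$ always.

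For the integrable case I would establish the reverse inclusion $H_J^-\subseteq H_J^{2,0}+H_J^{0,2}$ by splitting representatives. Let $\alpha$ be a real closed $J$-anti-invariant form and complexify it as $\alpha=\alpha^{2,0}+\alpha^{0,2}$ with $\alpha^{0,2}=\overline{\alpha^{2,0}}$. When $J$ is integrable $d=\partial+\bar\partial$, and on a complex surface $\Lambda^{3,0}=\Lambda^{0,3}=0$, so $d\alpha^{2,0}=\bar\partial\alpha^{2,0}$ has type $(2,1)$ while $d\alpha^{0,2}=\partial\alpha^{0,2}$ has type $(1,2)$; these are distinct bidegree components of $d\alpha=0$, hence each vanishes. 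Thus $\alpha^{2,0},\alpha^{0,2}$ are separately closed, $[\alpha]=[\alpha^{2,0}]+[\alpha^{0,2}]\in H_J^{2,0}+H_J^{0,2}$, and combined with the previous paragraph $(H_J^{2,0}+H_J^{0,2})\cap H^2(M;\mathbb R)=H_J^-$. Complexifying via \eqref{complexification} yields $(H_J^{2,0}+H_J^{0,2})=H_J^-\otimes_{\mathbb R}\mathbb C$ and the real identity \eqref{real20}.

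The heart of the lemma, and the step I expect to be hardest, is the non-integrable case, where I would show directly that there are no nonzero closed $(2,0)$ forms, so that $H_J^{2,0}=0$ and hence $H_J^{0,2}=\overline{H_J^{2,0}}=0$. Let $\psi$ be a closed $(2,0)$ form. In a local unitary coframe the only component of $d$ that raises the anti-holomorphic degree by two is $\bar\mu$, the piece carrying $\Lambda^{2,0}$ into $\Lambda^{1,2}$, which is exactly the contraction of $\psi$ against the Nijenhuis tensor $N$; the remaining components of $d\psi$ land in $\Lambda^{3,0}=0$ and $\Lambda^{2,1}$. Hence $d\psi=0$ forces $\bar\mu\psi=0$, and since $\Lambda^{2,0}$ is a complex line bundle a short computation shows this means $N$ vanishes at every point where $\psi\neq 0$ (this pointwise statement is Hitchin's Lemma \cite{Hi2}). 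Now set $\alpha=\psi+\bar\psi\in Z^-_J$; its zero set coincides with that of $\psi$ because $\psi$ and $\bar\psi$ have complementary bidegree. If $\psi\not\equiv 0$ then $\alpha\not\equiv 0$, so by Lemma \ref{Janti-sd} the nodal set $\alpha^{-1}(0)=\psi^{-1}(0)$ has empty interior; therefore $\{N=0\}\supseteq\{\psi\neq 0\}$ is dense, and being closed (as $N$ is continuous) it is all of $M$, i.e. $J$ is integrable, a contradiction. Thus $\psi\equiv 0$, giving $H_J^{2,0}+H_J^{0,2}=0$.

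The main obstacle is precisely the pointwise Nijenhuis computation identifying $\bar\mu\psi$ with a nondegenerate pairing against $N$ on the line $\Lambda^{2,0}$; granting that (via Hitchin's Lemma), the passage from pointwise vanishing of $N$ on $\{\psi\neq 0\}$ to global vanishing is exactly what the unique continuation property recorded in Lemma \ref{Janti-sd} supplies, which is why the almost Hermitian machinery of the preceding lemmas is the right tool here.
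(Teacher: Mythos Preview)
Your proof is correct and follows essentially the same approach as the paper: both reduce the non-integrable case to the observation that the $(1,2)$-component of $d\Phi$ for a $(2,0)$-form $\Phi$ is the contraction against the Nijenhuis tensor (the paper writes this as $d\Phi(Z_1,\overline{Z_2},\overline{Z_3})=-\Phi([\overline{Z_2},\overline{Z_3}]^{1,0},Z_1)$, you phrase it via the operator $\bar\mu$), and both invoke the density of $M\setminus\Phi^{-1}(0)$ from Lemma~\ref{Janti-sd} to pass from pointwise to global vanishing of $N$. In the integrable case both use $d=\partial+\bar\partial$ together with $\Lambda^{3,0}=0$ on a surface; your presentation is slightly more explicit in separately verifying the ``always'' inclusion $(H_J^{2,0}+H_J^{0,2})\cap H^2(M;\mathbb R)\subseteq H_J^-$ and in reducing the full statement to the real part via \eqref{complexification}, whereas the paper packages both inclusions into the parametrization $\Phi=\beta+iJ\beta$.
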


\begin{proof} A (complex) form $\Phi \in \Omega_J^{2,0}$ is of the form
$$\Phi = \beta + iJ\beta, \mbox{ where } \beta \in \Omega^-_J .$$
Assume $\beta \not\equiv 0$. The point of the lemma
is that $d \beta = 0$ and $d (J\beta) = 0$ occur simultaneously if
and only if $J$ is integrable. To see this, let
$Z_j = X_j - i JX_j$, $j=1,2,3$ be arbitrary (1,0) vector fields. Then
$$d \Phi (Z_1, \overline{Z_2}, \overline{Z_3}) = - \Phi( [\overline{Z_2}, \overline{Z_3}]^{1,0}, Z_1 ). $$
Assuming $d \beta = d (J\beta) = 0$, i.e. $d \Phi = 0$, the above relation implies $[\overline{Z_2}, \overline{Z_3}]^{1,0} = 0$.
This follows first on the set $M' = M \setminus \beta^{-1}(0)$, but then everywhere on $M$ by continuity, since $M'$
is dense in $M$ (see Lemma \ref{Janti-sd}). This implies the integrability of $J$.

Conversely, assume that $J$ is integrable and we want to show that
$d\beta = 0$ iff $d(J\beta) =0$. Using $d = \partial + \bar
\partial$, and $2\beta = \Phi + \bar \Phi$, we have
$$2 d \beta = (\partial + \bar \partial) (\Phi + \bar \Phi) = \bar \partial \Phi + \partial \bar \Phi \; .$$
(We used that $\partial \Phi = 0$ since it is a (3,0) form on a complex surface.)
Thus $d \beta = 0$ iff $\bar \partial \Phi = 0$. Similarly, $d(J\beta) = 0$ iff $\bar \partial (i\Phi) = 0$.
But it is obvious that $\bar \partial \Phi = 0$ iff $\bar \partial (i\Phi) = 0$.

\end{proof}

\begin{remark} There are examples of non-integrable almost complex structures for which
the real group $H_J^-$ is non-zero, although, as shown above, the complex group
$H_J^{2,0}+H_J^{0,2}$ is always zero in this case. See Example \ref{exph-} below and
the remark that follows.
\end{remark}

\vspace{0.2cm}
  \noindent By the above two lemmas, we get:

\begin{cor} \label{cxfull} Suppose $J$ is an almost complex
structure on a compact $4-$manifold. Then $J$ is always complex
$C^{\infty}$-pure in the sense $H_J^{1,1} \cap H_J^{2,0} \cap
H_J^{0,2}=\{0\}$. Moreover, $J$ is also complex $C^{\infty}$-full,
i.e.
$$H^2(M;\mathbb C)=H_J^{1,1} \oplus H_J^{2,0} \oplus H_J^{0,2},$$ if and only if $J$ is
integrable or $h_J^-=0$.
\end{cor}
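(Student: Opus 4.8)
The plan is to derive everything from the real decomposition of Theorem~\ref{pf-dim4} together with the two structural identities \eqref{complex11} and \eqref{complex20}, passing freely between real and complex coefficients. The starting observation is that complexifying the real direct sum $H^2(M;\mathbb R)=H_J^+\oplus H_J^-$ and substituting $H_J^{1,1}=H_J^+\otimes_{\mathbb R}\mathbb C$ from \eqref{complex11} yields the coefficient-free identity
\[
H^2(M;\mathbb C)=H_J^{1,1}\oplus\bigl(H_J^-\otimes_{\mathbb R}\mathbb C\bigr),
\]
valid whether or not $J$ is integrable. I would also record the inclusion $H_J^{2,0}+H_J^{0,2}\subseteq H_J^-\otimes_{\mathbb R}\mathbb C$, which is immediate because a closed $(2,0)$ form $\beta+iJ\beta$ (with $\beta\in\Omega_J^-$) has closed $J$-anti-invariant real and imaginary parts; by \eqref{complex20} this inclusion is an equality when $J$ is integrable and forces $H_J^{2,0}=H_J^{0,2}=0$ when $J$ is not.

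For complex $C^\infty$-purity, I would note that the triple intersection is contained in $H_J^{1,1}\cap(H_J^{2,0}+H_J^{0,2})$, hence in $(H_J^+\otimes_{\mathbb R}\mathbb C)\cap(H_J^-\otimes_{\mathbb R}\mathbb C)=(H_J^+\cap H_J^-)\otimes_{\mathbb R}\mathbb C$, which vanishes by the $C^\infty$-purity half of Theorem~\ref{pf-dim4}. This needs no case distinction.

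For $C^\infty$-fullness I would split on integrability. If $J$ is not integrable, the sum collapses to $H_J^{1,1}=H_J^+\otimes_{\mathbb R}\mathbb C$, which equals $H^2(M;\mathbb C)$ exactly when $H_J^-\otimes_{\mathbb R}\mathbb C=0$, i.e. $h_J^-=0$; the count $b_2=h_J^++h_J^-$ from Theorem~\ref{pf-dim4} makes this an equivalence. If $J$ is integrable, spanning already holds by the displayed identity and \eqref{complex20}, so fullness reduces to showing the sum $H_J^{2,0}+H_J^{0,2}$ is \emph{direct}, i.e. $H_J^{2,0}\cap H_J^{0,2}=0$; combined with $H_J^{1,1}\cap(H_J^-\otimes_{\mathbb R}\mathbb C)=0$ this produces the triple direct sum. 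Assembling the two cases gives precisely the stated equivalence.

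The main obstacle is the vanishing $H_J^{2,0}\cap H_J^{0,2}=0$ in the integrable case, which I would prove by a cup-product argument on the compact complex surface. Given a class $\mathfrak a$ in the intersection, represent it both by a closed $(2,0)$ form $\Phi$ and by a closed $(0,2)$ form $\Psi$ (a closed $(2,0)$ form is automatically holomorphic, since $\partial\Phi$ has type $(3,0)=0$). Evaluating the well-defined pairing $\mathfrak a\cup\bar{\mathfrak a}$ with the representatives $\Phi,\bar\Phi$ gives $\int_M\Phi\wedge\bar\Phi$, a fixed-sign multiple of the volume form and hence nonzero whenever $\Phi\not\equiv0$; evaluating it instead with $\Phi$ for $\mathfrak a$ and $\bar\Psi$ for $\bar{\mathfrak a}$ gives $\int_M\Phi\wedge\bar\Psi=0$, since $\Phi\wedge\bar\Psi$ has type $(4,0)=0$. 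Thus $\Phi\equiv0$ and $\mathfrak a=0$. The only points requiring care are the definiteness of $\int_M\Phi\wedge\bar\Phi$ for a holomorphic $2$-form and the representative-independence of the pairing, both of which are standard.
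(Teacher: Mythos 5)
Your proposal is correct and follows essentially the same route the paper intends: combine the real decomposition of Theorem~\ref{pf-dim4} with the identifications $H_J^{1,1}=H_J^+\otimes_{\mathbb R}\mathbb C$ from \eqref{complex11} and $H_J^{2,0}+H_J^{0,2}=H_J^-\otimes_{\mathbb R}\mathbb C$ (integrable) or $0$ (non-integrable) from Lemma~\ref{Hitchin-lemma}. The one place you go beyond the paper's terse ``by the above two lemmas'' is in explicitly verifying $H_J^{2,0}\cap H_J^{0,2}=0$ in the integrable case via the cup product $\mathfrak a\cup\bar{\mathfrak a}$, a point the paper leaves implicit (it is absorbed into the weight~2 formal Hodge decomposition for complex surfaces quoted from \cite{BPV} in the following subsection); your self-contained argument there is valid, since $\Phi\wedge\bar\Phi$ is a pointwise nonnegative multiple of the volume form while $\Phi\wedge\bar\Psi$ has type $(4,0)$ and vanishes.
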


\subsubsection{Dolbeault decomposition  when $J$ is integrable}
When $J$ is integrable,  there is  the Dolbeault decomposition which
has long been discovered. We briefly recall this decomposition and
relate it to the groups $H^{p,q}_J$ introduced in the previous
subsection.

%When $J$ is integrable, $\bar
%\partial ^2=0$, so there is the $\bar
%\partial$ complex and the associated Dolbeault cohomology groups, which we denote by $H_{\bar\partial}^{p,q}(M)$.
%But our groups  $H_J^{\pm}(M)_{\mathbb R}$ are subgroups of the De
%Rham cohomology groups, and are generally different
%from the Dolbeault groups in arbitrary dimension. However, for a
%complex surface, they are closely related.

 The Fr\"ohlicher spectral
sequence of the double complex
$$(\Omega^*(M)\otimes \mathbb C=\oplus \Omega^{p,q},
\partial, \bar\partial)$$
reads (see p. 41-45, p. 140-141  in \cite{BPV}):
$$E_1^{p,q}=H^{p,q}_{\bar\partial}(M)\Rightarrow
H^{p+q}(M;\mathbb C).$$ The resulting Hodge filtration on
$H^2(M;\mathbb C)$ reads:
$$H^2(M;\mathbb C)=F^0(H^2)\supset F^1(H^2)\supset F^2(H^2)\supset
0,$$ where
\begin{equation}
F^p(H^2)=\{[\alpha], \alpha\in \oplus_{p'+q'=2, p'\geq
p}\Omega^{p',q'}|d\alpha=0\}.
\end{equation}
Since $$H^{p,q}_{\bar\partial}(M)=E_1^{p,q}\to
E_{\infty}^{p,q}=\frac{F^p(H^{p+1}(M;\mathbb
C))}{F^{p+1}(H^{p+1}(M;\mathbb C))},$$ if the Fr\"ohlicher spectral
sequence degenerates at $E_1$, then
\begin{equation} H^{p,q}_{\bar\partial}(M)\cong \frac{F^p(H^{p+1}(M;\mathbb
C))}{F^{p+1}(H^{p+1}(M;\mathbb C))}.
\end{equation}
For $p+q=2$ let
\begin{equation}
'H^{p,q}(M)=F^p(H^2)\cap \overline{F^q(H^2)}.
\end{equation}
\begin{lemma}$'H^{p,q}$ consists of de Rham classes which can be represented
by a form of type $(p, q)$, i.e
\begin{equation}\label{'}'H^{p,q}=H_J^{p,q}.
\end{equation}
\end{lemma}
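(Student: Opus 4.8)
The plan is to prove the two inclusions $'H^{p,q} \subseteq H_J^{p,q}$ and $H_J^{p,q} \subseteq {}'H^{p,q}$ separately, unwinding the definitions of both objects. Recall that $'H^{p,q} = F^p(H^2) \cap \overline{F^q(H^2)}$, where $F^p(H^2)$ consists of De Rham classes representable by a closed form lying in $\bigoplus_{p'+q'=2,\,p'\geq p}\Omega^{p',q'}$, while $H_J^{p,q}$ consists of those classes representable by a closed form of pure type $(p,q)$. The harder and more instructive case is $p=q=1$; the cases $(2,0)$ and $(0,2)$ will follow by conjugation together with an inspection of the filtration, so I would concentrate first on $(1,1)$.

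For the inclusion $H_J^{1,1} \subseteq {}'H^{1,1}$, I would start with a De Rham class $\mathfrak{a}$ represented by a closed form $\rho$ of pure type $(1,1)$. Then $\rho$ itself exhibits $\mathfrak{a}$ as lying in $F^1(H^2)$, since $(1,1)$ satisfies $p'\geq 1$. For membership in $\overline{F^1(H^2)}$, I would use that $\rho$ is real-representable: replacing $\rho$ by its real part (which is again a closed $(1,1)$ form representing the same real class, as in the proof of \eqref{real11}), its conjugate $\bar\rho$ also has type $(1,1)$ and is closed, placing the conjugate class in $F^1(H^2)$ and hence $\mathfrak{a} \in \overline{F^1(H^2)}$. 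This gives $\mathfrak{a}\in F^1\cap\overline{F^1} = {}'H^{1,1}$.

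The reverse inclusion $'H^{1,1}\subseteq H_J^{1,1}$ is where the real content of the lemma sits, and is the step I expect to be the main obstacle. Here one starts from a class in $F^1(H^2)\cap\overline{F^1(H^2)}$, which means it has a closed representative $\alpha$ with no $(0,2)$ component and, simultaneously, a closed representative whose conjugate has no $(2,0)$ component. The task is to manufacture from these a single closed representative of pure type $(1,1)$. The natural strategy is to write $\alpha = \alpha^{2,0}+\alpha^{1,1}$ with $d\alpha=0$ and argue that the class also equals $[\beta]$ for $\beta$ with $\beta^{2,0}=0$; subtracting, $\alpha-\beta$ is exact and carries the obstructing $(2,0)$ part. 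One then wants to modify $\alpha$ by an exact form to kill $\alpha^{2,0}$ while preserving closedness and the $(1,1)$-type of what remains. This is exactly the type of argument that succeeds when $\bar\partial^2=0$, i.e. when $J$ is integrable, and I would exploit the degeneration discussion preceding the lemma: on a complex surface the Fröhlicher spectral sequence degenerates at $E_1$, which is what allows the filtration pieces $F^p\cap\overline{F^q}$ to be realized by pure-type closed forms.

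Concretely, I would lean on the identification $H^{p,q}_{\bar\partial}(M)\cong F^p(H^{p+q})/F^{p+1}(H^{p+q})$ valid under $E_1$-degeneration, combined with $E_1$-degeneration forcing the associated-graded description of $H^2(M;\mathbb C)$ to match the direct sum of the $H_J^{p,q}$; the equality of dimensions plus the evident inclusions $H_J^{p,q}\subseteq {}'H^{p,q}$ then upgrade the inclusions to equalities by a dimension count. The main obstacle is genuinely the integrability input: without $\bar\partial^2=0$ one cannot pass freely between Dolbeault representatives and closed pure-type forms, so the proof must invoke, at least implicitly, that $J$ is a complex structure (this subsection is titled "Dolbeault decomposition when $J$ is integrable"), and I would make the dependence on degeneration explicit rather than attempting a direct Hodge-theoretic construction of the $(1,1)$ representative.
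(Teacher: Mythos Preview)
You correctly identify the direct strategy---take representatives $\alpha_1=\alpha_1^{2,0}+\alpha_1^{1,1}$ and $\alpha_2=\alpha_2^{1,1}+\alpha_2^{0,2}$ of the same class, write $\alpha_1-\alpha_2=d\beta$, and modify by an exact form---but then abandon it for a dimension count via Fr\"ohlicher degeneration. The paper does exactly the direct argument you set aside, and it is one line: since $J$ is integrable, $d\beta = d\beta^{1,0}+d\beta^{0,1}$ with $d\beta^{1,0}\in\Omega^{2,0}\oplus\Omega^{1,1}$ and $d\beta^{0,1}\in\Omega^{1,1}\oplus\Omega^{0,2}$; hence $\alpha_1-d\beta^{1,0}=\alpha_2+d\beta^{0,1}$ is closed, and comparing types on both sides shows it is pure $(1,1)$. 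The only input is $d=\partial+\bar\partial$, not degeneration. The $(2,0)$ and $(0,2)$ cases are even simpler: ${}'H^{2,0}=F^2(H^2)\cap\overline{F^0(H^2)}=F^2(H^2)$, which by definition consists exactly of classes of closed $(2,0)$-forms, and similarly for $(0,2)$ by conjugation.

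Your alternative route has a genuine gap. You assert that ``$E_1$-degeneration forces the associated-graded description of $H^2(M;\mathbb C)$ to match the direct sum of the $H_J^{p,q}$,'' but degeneration gives $H^{p,q}_{\bar\partial}\cong F^p/F^{p+1}$, which says nothing about $H_J^{p,q}$ directly; linking $H^{p,q}_{\bar\partial}$ to $H_J^{p,q}$ is precisely what this lemma is used for in the paper (see how \eqref{'} combines with the BPV theorem to yield \eqref{formalHodge} and Proposition~\ref{relations}). Even granting the formal Hodge decomposition $H^2=\bigoplus {}'H^{p,q}$ from \cite{BPV}, your dimension count still needs $\sum\dim H_J^{p,q}\geq\dim H^2$, and nothing you wrote establishes that the $H_J^{p,q}$ span. (A minor point: your detour through real parts to place a $(1,1)$ class in $\overline{F^1(H^2)}$ is unnecessary---a closed $(1,1)$-form already lies in $\Omega^{1,1}\subset\Omega^{1,1}\oplus\Omega^{0,2}$, exhibiting its class in $\overline{F^1}$ directly.)
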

This should be known to experts; we record the argument here since
it is useful to elucidate the relation between $H_J^+$ and $H_{\bar
\partial}^{1,1}$.
\begin{proof} $F^2(H^2)$ consists of de Rham classes which can be
represented by a form of type $(2, 0)$. Consequently,
$\overline{F^2(H^2)}$ consists of classes of $(0, 2)$ forms.

It remains to show that $F^1(H^2)\cap \overline{F^1(H^2)}$ consists
of de Rham classes which can be represented by a closed form of type
$(1,1)$. First of all, every such de Rham class lies in $F^1(H^2)$
and $\overline{F^1(H^2)}$. On the other hand, by definition, a class
is in $ F^1(H^2)\cap \overline{F^1(H^2)}$  if and only if it is
represented by closed forms $\alpha_1=\alpha_1^{1,1}+\alpha_1^{2,0}$
and $\alpha_2=\alpha_2^{1,1}+\alpha_2^{0,2}$. Now
$\alpha_1-\alpha_2=d\beta$, and it is easy to see that
$\alpha_1-d\beta^{1,0}=\alpha_2+d\beta^{0,1}$ is a $d$ closed (1,1)
form representing the same class.
\end{proof}
 A weight 2 {\it formal} Hodge decomposition is a decomposition of the form
\begin{equation}
H^2(M;\mathbb C)=\oplus_{p+q=2}{'H^{p,q}}.
\end{equation}

\begin{theorem} (\cite{BPV}) If $(M,J)$ is a K\"ahler manifold or a complex surface, then
the Fr\"ohlicher spectral sequence  degenerates at $E_1$, and there
is a weight 2 formal Hodge decomposition. Consequently,
\begin{equation}\label{formalHodge}
\begin{array}{lllll}
H_{\bar\partial}^{2,0}&=E_{\infty}^{2,0}&\cong F^2(H^2)&&\cong
{'H^{2,0}},\cr H_{\bar\partial}^{1,1}&=E_{\infty}^{1,1}&\cong
\frac{F^1(H^2)}{F^2(H^2)}&\cong F^1(H^2)\cap
\overline{F^1(H^2)}&\cong {'H^{1,1}},\cr
H_{\bar\partial}^{0,2}&=E_{\infty}^{0,2}&\cong \frac{H^2(M;\mathbb
C)}{F^1(H^2)}&\cong \overline{F^2(H^2)}&\cong {'H^{0,2}}.\cr
\end{array}
\end{equation}
\end{theorem}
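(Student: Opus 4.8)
The plan is to isolate the one substantive input, namely the degeneration of the Fr\"ohlicher spectral sequence at $E_1$, and then deduce \eqref{formalHodge} from it by pure bookkeeping. Granting degeneration, one has $E_\infty^{p,q}=E_1^{p,q}=H^{p,q}_{\bar\partial}(M)$, which is the first isomorphism in each row. By convergence of the spectral sequence, $E_\infty^{p,q}$ is the $p$-th graded piece of the Hodge filtration on $H^{p+q}$, so that $F^2(H^2)\cong E_\infty^{2,0}$ (since $F^3=0$), $F^1(H^2)/F^2(H^2)\cong E_\infty^{1,1}$, and $H^2/F^1(H^2)\cong E_\infty^{0,2}$; these are the second isomorphisms. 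Thus everything reduces to the degeneration statement, which I would prove in two cases.

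For the K\"ahler case I would invoke harmonic theory. Fixing a K\"ahler metric, the K\"ahler identities give $\Delta_d=2\Delta_{\bar\partial}=2\Delta_{\partial}$, so the three Laplacians share the same kernels and the space of $d$-harmonic $k$-forms splits by bidegree as $\mathcal H^k=\bigoplus_{p+q=k}\mathcal H^{p,q}$ with $\mathcal H^{p,q}\cong H^{p,q}_{\bar\partial}$. Each $d$-harmonic form is simultaneously $\partial$- and $\bar\partial$-closed, which is exactly the $\partial\bar\partial$-lemma; this forces every higher differential $d_r$ ($r\geq 1$) to vanish and yields degeneration at $E_1$ in all weights at once.

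The genuinely hard case, and the main obstacle, is that of a general compact complex surface, which need not be K\"ahler; here I would use only features special to complex dimension two. The key geometric input is that every holomorphic $1$-form on a compact complex surface is $d$-closed (proved by an integration argument independent of the classification); since $E_1^{1,0}=H^0(M,\Omega^1)$ consists of such forms and $d_1=\partial$, this kills $d_1$ into $E_1^{2,0}$, and holomorphic $2$-forms are automatically closed, so $(2,0)$ is a permanent cycle. Serre duality identifies $(E_1^{p,q})^{*}\cong E_1^{2-p,2-q}$ compatibly with $d_1$, which pairs the differential out of $(0,2)$ with the one just shown to vanish. What remains, and is the delicate point, is the middle row $q=1$: the maps $\partial\colon H^1(M,\Omega^1)\to H^1(M,\Omega^2)$ and its Serre dual $\partial\colon H^1(M,\mathcal O)\to H^1(M,\Omega^1)$, together with the transgression $d_2\colon E_2^{0,2}\to E_2^{2,1}$. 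Their vanishing is equivalent to the numerical identity $b_2=2p_g+h^{1,1}$ (weight-$2$ degeneration), and this is the classification-flavored part of the argument where the real work lies; I would emphasize that, unlike the K\"ahler case, Hodge symmetry $h^{p,q}=h^{q,p}$ may fail (precisely when $b_1$ is odd), so no metric splitting is available and one must rely on closedness of holomorphic forms plus duality.

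With degeneration in hand, the weight-$2$ formal Hodge decomposition is formal. Using the conjugation symmetry \eqref{conjugation} together with the dimension count supplied by degeneration, the filtration $F^\bullet$ and its conjugate $\overline{F^\bullet}$ are opposed, so $F^p\cap\overline{F^{2-p}}$ projects isomorphically onto $\mathrm{gr}^p_F H^2$ and these pieces sum directly to all of $H^2(M;\mathbb C)$, giving $H^2(M;\mathbb C)=\bigoplus_{p+q=2}{'H^{p,q}}$. This is the third isomorphism $\mathrm{gr}^p_F H^2\cong F^p\cap\overline{F^{q}}={'H^{p,q}}$, the last equality being the definition of $'H^{p,q}$; finally \eqref{'} identifies $'H^{p,q}=H_J^{p,q}$, which completes the chain of isomorphisms in \eqref{formalHodge}.
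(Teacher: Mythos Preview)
The paper does not supply a proof of this theorem: it is stated with the citation \cite{BPV} and used as a black box, so there is no argument in the paper to compare yours against. Your sketch is essentially the standard route taken in \cite{BPV} (Chapter~IV): harmonic theory in the K\"ahler case, and in the surface case the closedness of holomorphic $1$-forms together with Serre duality to kill the outer $d_1$'s, followed by the more delicate numerical identity $b_2=2p_g+h^{1,1}$ for the middle.

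One point deserves care. You write that once degeneration is known, conjugation plus the dimension count force $F^\bullet$ and $\overline{F^\bullet}$ to be opposed. For K\"ahler manifolds this is immediate from the harmonic splitting, but for a general compact complex surface degeneration alone does not give opposedness: equality of dimensions $\dim F^p+\dim\overline{F^{3-p}}=b_2$ is necessary but not sufficient, and one still has to check that $F^p\cap\overline{F^{3-p}}=0$. In \cite{BPV} this is handled by an additional argument (again using that holomorphic $1$- and $2$-forms are $d$-closed, so that e.g.\ a class in $F^2\cap\overline{F^2}$ is represented simultaneously by a closed $(2,0)$-form and a closed $(0,2)$-form, forcing it to vanish via the intersection pairing). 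Your final paragraph glosses over this step; if you want a self-contained proof rather than a pointer to \cite{BPV}, you should spell out why the two filtrations are opposite in the non-K\"ahler surface case.
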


Together with \eqref{'}, \eqref{real11} and \eqref{real20}, we
conclude
\begin{prop}\label{relations}
If $J$ is integrable on a compact 4--manifold, then
 \begin{equation} H_J^{p,q}=H_{\bar\partial}^{p,q},
\end{equation}
and
\begin{equation}\label{same} H_J^{+}=H_{\bar \partial}^{1,1}\cap H^2(M;\mathbb R),
\quad H_J^{-}=(H_{\bar \partial}^{2,0}\oplus H_{\bar
\partial}^{0,2})\cap H^2(M;\mathbb R).
\end{equation}

%Moreover, $J$ is $C^{\infty}$-pure and full.
%\begin{equation}\label{type decomposition}
%H^2(M;\mathbb R)=H_J^{+}\oplus H_J^{-}.
%\end{equation}
\end{prop}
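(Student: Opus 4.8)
The plan is to assemble Proposition \ref{relations} purely from results already established in the excerpt, treating it as a bookkeeping corollary rather than a fresh computation. The first identity $H_J^{p,q}=H_{\bar\partial}^{p,q}$ follows by concatenating two isomorphisms: the Theorem attributed to \cite{BPV} gives $H_{\bar\partial}^{p,q}\cong {'H^{p,q}}$ for each $(p,q)$ with $p+q=2$ (this is precisely the content of \eqref{formalHodge}, valid since a complex surface has a degenerate Fr\"ohlicher spectral sequence at $E_1$), and the preceding Lemma \eqref{'} gives ${'H^{p,q}}=H_J^{p,q}$. Composing these yields $H_J^{p,q}=H_{\bar\partial}^{p,q}$ directly. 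I would state this first, since it is the cleanest link and it feeds the second claim.

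Next I would derive \eqref{same}. For the $(1,1)$ part, I invoke \eqref{real11}, which holds in any dimension and reads $H_J^+=H_J^{1,1}\cap H^2(M;\mathbb R)$; substituting the identity $H_J^{1,1}=H_{\bar\partial}^{1,1}$ just obtained gives $H_J^+=H_{\bar\partial}^{1,1}\cap H^2(M;\mathbb R)$. For the anti-invariant part I use \eqref{real20} from Lemma \ref{Hitchin-lemma}, which (crucially, only because $J$ is assumed integrable) states $H_J^-=(H_J^{2,0}+H_J^{0,2})\cap H^2(M;\mathbb R)$; replacing $H_J^{2,0}$ and $H_J^{0,2}$ by $H_{\bar\partial}^{2,0}$ and $H_{\bar\partial}^{0,2}$ produces $H_J^-=(H_{\bar\partial}^{2,0}\oplus H_{\bar\partial}^{0,2})\cap H^2(M;\mathbb R)$. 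The sum $H_J^{2,0}+H_J^{0,2}$ is in fact direct by the conjugation relation \eqref{conjugation} together with the complex $C^\infty$-purity recorded in Corollary \ref{cxfull}, so writing $\oplus$ is justified.

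The only genuine subtlety to flag is the integrability hypothesis: the whole proposition is confined to integrable $J$, and this is exactly where it must be, because \eqref{real20} fails otherwise. Indeed Lemma \ref{Hitchin-lemma} shows that for non-integrable $J$ one has $H_J^{2,0}+H_J^{0,2}=0$, so the Dolbeault groups $H_{\bar\partial}^{p,q}$ are not even defined ($\bar\partial^2\neq 0$) and the correspondence collapses. I would therefore make sure the proof explicitly cites \eqref{real20} as the step that consumes integrability, rather than leaving the reader to rediscover why the statement is restricted.

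I do not expect any real obstacle here, since every ingredient is already proved; the main care is simply to cite the correct equation numbers in the correct logical order and to confirm that the sum in the $(2,0)\oplus(0,2)$ factor is legitimately direct. If I wanted to be fully self-contained I might add one line noting that on a complex surface $\partial$ of a $(2,0)$ form vanishes automatically (a $(3,0)$ form is zero), which is what makes closed $(2,0)$ forms $\bar\partial$-closed and hence identifies them with Dolbeault classes, but this is already implicit in the \cite{BPV} theorem and need not be repeated.
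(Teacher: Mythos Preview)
Your proposal is correct and follows essentially the same approach as the paper, which simply records the proposition as a consequence of \eqref{'}, \eqref{real11}, and \eqref{real20} (together with the \cite{BPV} theorem yielding \eqref{formalHodge}). Your write-up is more detailed than the paper's one-line justification, but the logical content and the cited ingredients are identical.
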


\noindent
Let us denote  the dimension of $H_J^{\pm}$ by $h^{\pm}_J$. When $J$ is integrable, it follows from Proposition
\ref{relations} that
\begin{equation} \label{Jint-hpmcx} h_J^+=h_{\bar\partial}^{1,1}, \quad
h_J^-=2h_{\bar\partial}^{2,0}.
\end{equation}
Together with the signature theorem (Theorem 2.7 in \cite{BPV}), we
get
\begin{equation} \label{Jint-hpm}
h^+_J  =\left\{ \begin{array}{ll}
b^- +1&\hbox{if $b_1$ even} \\
b^- &\hbox{if $b_1$ odd,} \end{array} \right. \quad h^-_J = \left\{
\begin{array}{ll}
b^+ -1&\hbox{if $b_1$ even} \\
b^+ &\hbox{if $b_1$ odd.}  \end{array} \right.
\end{equation}
It is a deep, but now well known fact that the cases $b_1$ even/odd
correspond to whether the complex surface $(M,J)$ admits or not a
compatible K\"ahler structure.

\vspace{0.2cm}

Notice that when $J$ is integrable the dimensions $h^{\pm}_J$ are
topological invariants. Such properties will not hold for general
almost complex structures. In fact, we conjecture that for generic
almost complex structures $h^-_J = 0$. However, there are examples
of non-integrable almost complex structures with  $h^-_J \neq 0$.
Here is one simple construction of such examples:
\begin{example} \label{exph-}
Let $(M, g, J_0, \omega_0)$ be a compact K\"ahler surface with $b^+ \geq 3$ and let $\Phi$ be
a (not identically zero) holomorphic (2,0) form on $M$ (existence of such $\Phi$ is guaranteed by
the assumption $b^+ \geq 3$). Let $\beta = Re(\Phi)$, $J_0\beta = Im(\Phi)$ be the real and imaginary
parts of $\Phi$. Both $\beta$ and $J_0\beta$ are closed, $J_0$-anti-invariant forms. Let $f \in C^{\infty}(M)$ be
an arbitrary, not-identically zero, smooth function and consider the form $\omega_{f,\beta} = \omega_0 + f \beta$.
Because $\beta$ is pointwise orthogonal to $\omega_0$, the form $\omega_{f,\beta}$ is
non-degenerate everywhere. Since $\omega_{f,\beta}$ is also $g$-self-dual,
it induces a $g$-compatible almost complex structure $J$ on $M$. $J$ is not integrable except
the case when $f = constant$ and $(M,g,J_0)$ is hyper-K\"ahler (see, for instance \cite{AGG}).
On the other hand, $J_0 \beta$ is a non-trivial closed,
$J$-anti-invariant form. The last statement is true because $J_0\beta$ is pointwise orthogonal to
both $\omega_0$ and $\beta$. Thus $H^-_J$ is non-trivial.
\end{example}

\begin{remark} In the above example, $J$ and $J_0$ are what we call {\rm metric related}
almost complex structures, as they share a common compatible metric. In \cite{DLZ2}
we compute the exact values of $h_J^{\pm}$ for all almost complex structures
$J$ which are metric related to integrable ones. Example 6.2 of \cite{FT}
exhibits a compact 4-manifold which admits no integrable complex structures, but which admits
an almost complex structure with $h_J^- = 1$.
\end{remark}

\section{Estimates for $h^{\pm}_J$ when $J$ is tamed by a symplectic form}

From Theorem \ref{pf-dim4}, on any compact 4-dimensional almost
complex manifold $(M,J)$ we have
\begin{equation} \label{h^+_+ h^-}
h^+_J + h^-_J = b_2.
\end{equation}
The decomposition (\ref{type-sdasd}) also leads to the following
immediate estimates
\begin{equation}\label{easyestimate}
h^{+}_J\geq b^-, \quad h^-_J\leq b^+.
\end{equation}

 One reason for our interest in
$H^{\pm}_J$ stems from the following fact. If $J$ admits compatible
symplectic forms, then the set of all such forms, the $J-$compatible
cone, $\mathcal K_J^c(M)$ is a (nonempty) open convex cone of
$H_J^+(M)$ \cite{LZ}. Thus it is important to determine the
dimension $h_J^+$ of $H_J^+(M)$.

In light of the question of Donaldson mentioned in the introduction,
it is also interesting to obtain information on the dimension
$h_J^+$ in the case when $J$ is just tamed by symplectic forms.

It was shown in \cite{LZ} that an integrable $J$ admits compatible
K\"ahler structures if and only if it admits tamed symplectic forms.
Thus we can state \eqref{Jint-hpm} in this context as follows:
 \begin{equation}\label{notall}
 h^+_J=\begin{cases}
b^-+1&\hbox{if $J$ is tamed and integrable,} \\
b^-&\hbox{if $J$ is non-tamed and integrable.}
\end{cases}
\end{equation}

%%The interest in $H_J^{+}$ stems from the following fact.
%%\begin{lemma}
%%The almost K\"ahler cone is an open convex cone  of $H_J^{+}(M)$.
%%\end{lemma}

%%An important observation is that
%%\begin{equation}
%%\Lambda_J^+=\Lambda_g^-\oplus \underline{\mathbb R}(\omega)
%%\end{equation}
\subsection{A general estimate}
When $J$ admits a compatible symplectic form, we have the following
easy improvement of (\ref{easyestimate}):
\begin{prop} \label{akest}
If $J$ is almost K\"ahler, then
\begin{equation}\label{akestimate}
h^{+}_J\geq b^- + 1, \quad h^{-}_J\leq b^+-1.
\end{equation}
\end{prop}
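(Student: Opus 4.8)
The plan is to prove the second inequality $h^-_J \leq b^+ - 1$ directly, and then obtain the first one for free from Theorem \ref{pf-dim4}, which gives $h^+_J + h^-_J = b^+ + b^-$; thus $h^-_J \leq b^+ - 1$ immediately yields $h^+_J \geq b^- + 1$. So the whole content is to improve the easy bound $h^-_J \leq b^+$ from \eqref{easyestimate} by exactly one dimension, and the source of that one extra dimension will be the class of the symplectic form itself.

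First I would fix a compatible metric $g$ with associated fundamental form $\omega$, where now $\omega$ is the given $J$-compatible \emph{symplectic} form. Since $\omega$ is self-dual and $d\omega = 0$, it is also co-closed ($\delta\omega = -{*}d{*}\omega = -{*}d\omega = 0$), hence $g$-harmonic; therefore $[\omega] \in \mathcal{H}^+_g = H^+_g$. Moreover $[\omega] \neq 0$, since $[\omega]\cup[\omega] = \int_M \omega\wedge\omega = \int_M \omega^2 > 0$ by nondegeneracy. The key point is then to compare $[\omega]$ with $H^-_J$: by Lemma \ref{Janti-sd} we have the identification $H^-_J = \mathcal{H}^{+,\omega^{\perp}}_g \subset \mathcal{H}^+_g$, the subspace of harmonic self-dual forms that are point-wise orthogonal to $\omega$. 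Because $\omega$ has constant point-wise norm $|\omega|^2_g = 2 \neq 0$, it is certainly not point-wise orthogonal to itself, so $[\omega] \notin H^-_J$, and more generally $\mathbb{R}[\omega] \cap H^-_J = \{0\}$.

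Consequently $H^-_J \oplus \mathbb{R}[\omega]$ is a direct sum sitting inside $\mathcal{H}^+_g$, which has dimension $b^+$. This gives $h^-_J + 1 \leq b^+$, i.e. $h^-_J \leq b^+ - 1$, and then Theorem \ref{pf-dim4} delivers $h^+_J = b_2 - h^-_J \geq b^- + 1$, completing the proof. I do not expect a genuine obstacle here: the argument is essentially the observation that the almost K\"ahler form provides a distinguished positive direction in $\mathcal{H}^+_g$ that lies outside the $\omega$-orthogonal subspace $H^-_J$. The only points requiring a line of justification are that $\omega$ is harmonic (self-duality plus closedness) and that $H^-_J$ is exactly $\mathcal{H}^{+,\omega^{\perp}}_g$, both of which are supplied by \eqref{type-sdasd} and Lemma \ref{Janti-sd}.
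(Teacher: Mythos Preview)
Your proof is correct and uses essentially the same idea as the paper: the class $[\omega]$ of the almost K\"ahler form is a nonzero element of $\mathcal{H}^+_g$, and this extra direction accounts for the improvement by one over the easy estimate \eqref{easyestimate}. The only cosmetic difference is which inequality you establish first: you show $[\omega]\notin H^-_J$ (via the identification $H^-_J=\mathcal{H}^{+,\omega^\perp}_g$ of Lemma \ref{Janti-sd}) to get $h^-_J\le b^+-1$, whereas the paper observes directly that $\omega$ is closed and $J$-invariant, hence $[\omega]\in H^+_J\cap H^+_g$, giving $h^+_J\ge b^-+1$; in both cases the other inequality then follows from Theorem \ref{pf-dim4}. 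The paper's phrasing is marginally more economical in the almost K\"ahler case (one does not need Lemma \ref{Janti-sd}), but it is chosen mainly because it generalizes verbatim to Lemma \ref{akest'} and then to the tame case in Theorem \ref{tameest}.
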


\noindent Actually, (\ref{akestimate}) can be obtained in a slightly
more general setting:
\begin{lemma} \label{akest'}
Suppose $(M, g, J, \omega)$ is a compact 4-dimensional almost
Hermitian manifold. Assume that the harmonic part $\omega_h$ of the
Hodge decomposition of $\omega$ is not identically zero. Then
(\ref{akestimate}) holds.
\end{lemma}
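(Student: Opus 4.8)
The plan is to prove the second inequality $h^-_J \le b^+ - 1$ directly; the first then follows automatically, since the direct sum decomposition of Theorem \ref{pf-dim4} gives $h^+_J + h^-_J = b_2 = b^+ + b^-$. The entire argument takes place inside the space $\mathcal H_g^+$ of harmonic self-dual forms, which has dimension $b^+$ and on which the $L^2$ inner product $\int_M \langle\cdot,\cdot\rangle\, dV$ is positive definite. The strategy is to exhibit a single nonzero element of $\mathcal H_g^+$ that is $L^2$-orthogonal to all of $H^-_J$, forcing $H^-_J$ to be a proper subspace.

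First I would record that $\omega$, being a $J$-invariant $2$-form on an almost Hermitian $4$-manifold, is self-dual, $*\omega = \omega$ (this is immediate from (\ref{type-sdasd})). Since $*$ commutes with the Hodge--deRham Laplacian, the harmonic part $\omega_h$ in the Hodge decomposition $\omega = \omega_h + d\theta + \delta\Psi$ is again self-dual by uniqueness of the decomposition, so $\omega_h \in \mathcal H_g^+$. By hypothesis $\omega_h \not\equiv 0$, so $\omega_h$ is a genuine nonzero vector of $\mathcal H_g^+$.

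The crux is to show that $\omega_h$ is $L^2$-orthogonal to $H^-_J$. By Lemma \ref{Janti-sd} we identify $H^-_J$ with $\mathcal H^{+,\omega^\perp}_g$, the harmonic self-dual forms pointwise orthogonal to $\omega$. For any such $\alpha$, harmonicity of $\alpha$ makes it $L^2$-orthogonal to both the exact and the co-exact parts of $\omega$, so $\int_M \langle \omega_h, \alpha\rangle\, dV = \int_M \langle \omega, \alpha\rangle\, dV$; and the right-hand side vanishes because $\alpha$ is pointwise orthogonal to $\omega$. Hence $\omega_h$ is $L^2$-orthogonal to $H^-_J$. Since $\omega_h$ is a nonzero element of $\mathcal H_g^+$ orthogonal to the subspace $H^-_J \subseteq \mathcal H_g^+$, this inclusion is strict, giving $h^-_J \le b^+ - 1$; combined with $h^+_J + h^-_J = b^+ + b^-$ this yields $h^+_J \ge b^- + 1$.

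I do not expect a serious obstacle here: the argument is essentially a single orthogonality computation. The one point requiring care is ensuring that $\omega_h$ really stays inside $\mathcal H_g^+$ and remains nonzero, so that it genuinely enlarges $H^-_J$ — and the hypothesis $\omega_h \not\equiv 0$ is precisely what upgrades the weak estimates (\ref{easyestimate}) to the strict ones. As a sanity check, in the almost K\"ahler case $\omega$ is closed and, being self-dual, co-closed as well, so $\omega_h = \omega \neq 0$ holds automatically and Proposition \ref{akest} is recovered as a special case.
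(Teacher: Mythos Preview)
Your proof is correct, but it takes a different route from the paper's. The paper works on the $H^+_J$ side: using Lemma~\ref{hodgedim4}, it observes that the $2$-form $\omega + 2(d\theta)_g^- = \omega_h + 2\,d\theta$ is simultaneously closed and $J$-invariant, hence $[\omega_h]$ is a nonzero class in $H^+_g \cap H^+_J$; together with the automatic inclusion $H^-_g \subset H^+_J$ this gives $h^+_J \ge b^- + 1$ directly. You instead work on the $H^-_J$ side, showing $\omega_h$ is $L^2$-orthogonal inside $\mathcal H^+_g$ to every element of $H^-_J = \mathcal H^{+,\omega^\perp}_g$, and then invoke Theorem~\ref{pf-dim4} to translate $h^-_J \le b^+ - 1$ into the other inequality. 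Your argument is a bit slicker in that it bypasses Lemma~\ref{hodgedim4} entirely; the paper's argument, on the other hand, is more constructive---it actually exhibits a closed $J$-invariant representative of $[\omega_h]$, and this explicit form is what gets used again in the proof of Theorem~\ref{tameest}.
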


\begin{proof} Let $\omega = \omega_h + d\theta + \delta \Psi$ be the
Hodge decomposition of $\omega$. From Lemma \ref{hodgedim4}, $\omega
+ 2 (d\theta)^- = \omega_h + 2 d\theta$ is a closed, $J$-invariant
2-form. By assumption, it represents a non-trivial cohomology class
in $H^+_g \cap H^+_J$ and the estimates follow.
\end{proof}

\noindent Of course, if $(M, g, J, \omega)$ is almost K\"ahler,
$\omega = \omega_h$, so Proposition \ref{akest} is obvious. More
interestingly, Lemma \ref{akest'} implies that the estimates
(\ref{akestimate}) hold for tamed $J$'s as well.

%%\begin{cor}\label{b+=1}
%%If $b^+=1$ and $J$ is almost K\"ahler, then \begin{equation}
%%h^{+}_J=1+b^-=b_2, \quad h^{-}_J= b^+-1=0.
%%\end{equation}
%%\end{cor}

\begin{theorem} \label{tameest}
Suppose $J$ is tamed by a symplectic form $\omega$. Then the
estimates (\ref{akestimate}) still hold.
\end{theorem}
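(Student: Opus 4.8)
The plan is to reduce the tamed case to Lemma \ref{akest'} by showing that a tamed symplectic form has nontrivial harmonic part with respect to a suitably chosen compatible metric. The key observation is that if $\omega$ tames $J$, then $\omega$ is not exact: indeed, a taming form satisfies $\omega(v, Jv) > 0$ for all nonzero $v$, which forces $\int_M \omega \wedge \omega > 0$ (the form $\omega \wedge \omega$ is a positive multiple of the volume form determined by $J$), so $[\omega]^2 \neq 0$ in $H^2(M;\mathbb{R})$ and hence $[\omega] \neq 0$. Thus $\omega$ represents a nonzero De Rham cohomology class.

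First I would fix any $J$-compatible metric $g$ and consider the Hodge decomposition $\omega = \omega_h + d\theta + \delta\Psi$. Since $[\omega] \neq 0$, the harmonic part $\omega_h$ is precisely the harmonic representative of the nonzero class $[\omega]$, so $\omega_h \not\equiv 0$. This is exactly the hypothesis of Lemma \ref{akest'}. The subtlety is that Lemma \ref{akest'} is stated for an almost Hermitian structure $(M, g, J, \omega)$ in which $\omega$ is \emph{the} fundamental form of the pair $(g,J)$, i.e. $\omega(u,v) = g(Ju,v)$ and $\omega$ is $J$-invariant of pointwise norm $\sqrt 2$. A general taming form need not be $J$-invariant, so I cannot feed $\omega$ directly into the lemma.

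To handle this I would pass to the $J$-invariant part. Write $\omega = \omega' + \omega''$ for the decomposition (\ref{formtype}) into $J$-invariant and $J$-anti-invariant components. The $J$-invariant part $\omega'$ is again a taming-positive form in the sense that $\omega'(v, Jv) = \omega(v,Jv) > 0$, since the anti-invariant part contributes nothing to $\omega(v,Jv)$; hence $\omega'$ is pointwise nondegenerate and, after the pointwise conformal rescaling that normalizes its norm to $\sqrt 2$, it becomes the fundamental form of a genuine $J$-compatible metric $\tilde g$. The cohomological content I need survives this passage: the harmonic part of $\omega'$ (with respect to $\tilde g$) is nonzero provided $[\omega']$ is a nonzero class. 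I would verify $[\omega'] \neq 0$ by the same cup-product argument, computing $\int_M \omega' \wedge \omega' = \int_M \omega \wedge \omega > 0$, where the cross terms and the $\omega'' \wedge \omega''$ term are controlled using Lemma \ref{Janti-sd} (the anti-invariant part is self-dual, so $\int \omega'' \wedge \omega'' = \int |\omega''|^2 \, d\mu \geq 0$, which only helps the positivity).

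The main obstacle will be the bookkeeping in this last reduction: ensuring that the rescaling making $\omega'$ into a fundamental form does not destroy the nonvanishing of the harmonic part, and correctly relating the Hodge decompositions with respect to $g$, $\tilde g$. Once $\tilde g$ and the normalized $J$-invariant fundamental form are in place with nonzero harmonic part, Lemma \ref{akest'} applies verbatim and yields the estimates (\ref{akestimate}), completing the proof.
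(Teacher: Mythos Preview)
Your overall strategy --- pass to the $J$-invariant part $\omega'$ of $\omega$, use it as the fundamental form of a $J$-compatible metric $g$, and invoke Lemma \ref{akest'} --- is exactly the route the paper takes. The gap is in your verification that the harmonic part $\omega'_h$ is nonzero. You write ``$[\omega'] \neq 0$'' and argue via $\int_M \omega' \wedge \omega'$, but $\omega'$ is \emph{not closed} in general (only $\omega = \omega' + \omega''$ is), so there is no cohomology class $[\omega']$ to speak of. And the positivity of $\int_M \omega' \wedge \omega'$ by itself says nothing about $\omega'_h$: any almost Hermitian fundamental form satisfies $\int_M \omega' \wedge \omega' > 0$ automatically, including those with $\omega'_h \equiv 0$. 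Your parenthetical about $\int \omega'' \wedge \omega'' \geq 0$ also points the wrong way, since the cross terms vanish and one gets $\int \omega' \wedge \omega' = \int \omega \wedge \omega - \int \omega'' \wedge \omega''$. This is not bookkeeping with the rescaling; it is the heart of the matter.

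The paper closes the gap by cupping with the \emph{closed} form $\omega$ rather than with $\omega'$ itself. Using Lemma \ref{hodgedim4} applied to the self-dual form $\omega'$, one has $\omega'_h + 2\,d\theta = \omega' + 2(d\theta)_g^-$, and since $\omega$ is closed,
\[
[\omega'_h] \cup [\omega] \;=\; \int_M (\omega'_h + 2\,d\theta)\wedge\omega \;=\; \int_M \bigl(\omega' + 2(d\theta)_g^-\bigr)\wedge(\omega' + \omega'') \;=\; \int_M \omega'\wedge\omega' \;>\;0,
\]
the last equality because $(d\theta)_g^-$ is anti-self-dual while $\omega', \omega'' \in \Omega_g^+$, and $\omega'\wedge\omega''=0$ by type. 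This forces $\omega'_h \not\equiv 0$, and then Lemma \ref{akest'} applies. The idea you are missing is precisely this pairing against $[\omega]$.
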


\begin{proof}
Write
\begin{equation} \omega=\omega'+\omega''
\end{equation}
with $\omega'\in \Omega_J^+$ and $\omega''\in \Omega_J^-$.
Explicitly, \begin{equation}\omega'(v, w)=\frac{1}{2}\omega(v,
w)+\frac{1}{2}\omega(Jv,Jw). \end{equation} Then $\omega'$ is
compatible with $J$ and non-degenerate, thus it determines a
Riemannian metric $g$. From the pair $(\omega, J)$ we actually get a
conformal class of metrics, these for which $\Lambda_g^+=$
Span$\{\omega, \Lambda_J^-\}$. The metric we fixed is singled out by
imposing that $|\omega'|^2=2$.

We show that Lemma \ref{akest'} can be applied to the almost
Hermitian structure $(g,J,\omega')$. It is enough to show that the
harmonic part $\omega'_h$ is not identically zero. This is true
because the following cup product is non-zero:
$$ [\omega'_h] \cup [\omega] = \int_M \omega'_h \wedge \omega =
\int_M (\omega'_h + 2 d\theta) \wedge \omega = $$
$$ = \int_M (\omega' + 2 (d\theta)_g^-) \wedge (\omega' + \omega'') =
\int_M \omega' \wedge \omega' \neq 0. $$
\end{proof}

\vspace{0.2cm}

\noindent The following is an immediate consequence:

\begin{cor}\label{b+=1t}
If $b^+=1$ and $J$ is tamed, then \begin{equation}
h^{+}_J=1+b^-=b_2, \quad h^{-}_J= b^+-1=0.
\end{equation}
\end{cor}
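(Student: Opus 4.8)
The plan is to derive Corollary \ref{b+=1t} as a direct specialization of Theorem \ref{tameest} combined with the dimension identity \eqref{h^+_+ h^-}. The hypotheses are $b^+ = 1$ and that $J$ is tamed by some symplectic form. First I would invoke Theorem \ref{tameest}, which guarantees that the estimates \eqref{akestimate} hold for tamed $J$; in particular $h^-_J \leq b^+ - 1$. Substituting $b^+ = 1$ gives $h^-_J \leq 0$, and since $h^-_J$ is a dimension it is non-negative, forcing $h^-_J = 0 = b^+ - 1$.

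Next I would recover $h^+_J$ from Theorem \ref{pf-dim4}, which asserts the direct sum decomposition $H^2(M;\mathbb{R}) = H^+_J(M) \oplus H^-_J(M)$ and hence the numerical identity \eqref{h^+_+ h^-}, namely $h^+_J + h^-_J = b_2$. With $h^-_J = 0$ already established, this immediately yields $h^+_J = b_2$. Finally, to present the answer in the form stated, I would rewrite $b_2$ using $b_2 = b^+ + b^-$ and $b^+ = 1$, giving $h^+_J = b^- + 1 = b_2$, matching both expressions in the statement.

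There is genuinely no obstacle here: the corollary is purely a book-keeping consequence of two results already proved. The only point requiring any care is the small observation that the general lower bound $h^+_J \geq b^- + 1$ from \eqref{akestimate} is automatically consistent with (and in fact forced to be an equality by) the complementary estimate on $h^-_J$ once $b^+ = 1$ collapses the range available to $h^-_J$. In other words, for $b^+ = 1$ the two inequalities in \eqref{akestimate} each become equalities, and the decomposition identity pins down both Betti-type invariants exactly. One could write the whole argument as the single chain $0 \leq h^-_J \leq b^+ - 1 = 0$, followed by $h^+_J = b_2 - h^-_J = b_2 = 1 + b^-$.
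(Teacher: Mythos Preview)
Your proof is correct and follows exactly the approach of the paper, which simply notes that the corollary is an immediate consequence of Theorem~\ref{tameest} (together with the decomposition identity \eqref{h^+_+ h^-} from Theorem~\ref{pf-dim4}). Your exposition just makes explicit the one-line chain $0 \leq h^-_J \leq b^+ - 1 = 0$ and $h^+_J = b_2 - h^-_J = 1 + b^-$.
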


%This leads naturally to the following question
%\begin{question} If $J$ is an almost complex structure with empty
%tamed cone, do we still have  $h^+=b^-$?
%\end{question}

\begin{remark} \label{generic}  Lemma \ref{akest'} can also be applied to show that if
  $b^+\geq 1$, then the estimates (\ref{akestimate}) even hold for
generic non-tamed almost complex structures $J$ (but not all in view
of \eqref{notall}).
\end{remark}

It has been shown in \cite{Sco} that non-tamed almost complex
structures exist in any path-connected component of almost complex
structures.

%This leads naturally to the following question (compare
%with Question \ref{geq2}):
% On a $4-$manifold admitting symplectic structures, is there  a non-tamed
% almost complex structure $J$ in the path connected component of tamed almost complex structures, having
%   $h_J^+=b^-$, or equivalently, $h_J^-=b^+$?

%Such a $J$ is necessarily non-integrable.  In the case $b^+=1$, we
%have shown that there are non-integrable almost complex structures
%$\tilde J$  with $h_{\tilde J}^-=1$  and which are metric related to
%a K\"ahler complex structure. But it is not clear that they are
%non-tamed.
%From now on we shall consider the case that $J$ is tamed by some
%symplectic form.

\vspace{0.2cm}

\vspace{0.3cm}
\subsection{A formulation of Donaldson's question}
 We end this section by giving an equivalent formulation of
Question \ref{Donaldson}. Suppose $\tilde J$ is an almost complex
structure that is tamed by a symplectic form $\omega$ on a compact
4-manifold $M$. As noted in the proof of Theorem \ref{tameest}, the
pair $(\tilde J, \omega)$ gives rise to a conformal class of
Riemannian metrics $[g]$, so that $\Lambda^+_{[g]} = {\rm Span}
\{\omega, \Lambda^-_{\tilde J} \}$. In the proof of Theorem
\ref{tameest}, we chose in this conformal class the metric that 
made $\omega '$, the $\tilde J$-invariant part of $\omega$, have
point-wise norm $\sqrt{2}$.

For the comments below, we prefer to use another natural metric in
this conformal class: we choose the metric $g$ so that $|\omega|^2_g
= 2$ point-wise on $M$. Equivalently, $g$ is chosen so that $g$ and
$\omega$ induce an almost K\"ahler structure $(g, J, \omega)$.
Certainly, $\tilde J$ is also $g$-compatible, and let $\tilde
\omega$ be the fundamental 2-form of $(g, \tilde J)$. Then
\begin{equation} \label{ombeta}
\tilde \omega = f \omega + \gamma, \; \mbox{with $\gamma \in
\Omega^-_J$, $f \in C^{\infty}(M)$ so that $2f^2 + |\gamma|^2 = 2$.
}
\end{equation}
%This can be
%written as in \eqref{ombeta}.
%$$\tilde \omega = f \omega + \beta , \mbox{ with } f \in C^{\infty}(M), \beta \in \Omega^-_J.$$
Since $\tilde J$ is tamed by $\omega$, the function $f$ is strictly positive on $M$. Thus, we can think that
$\tilde J$ is induced by the metric $g$ and the 2-form $\omega + \frac{1}{f}\gamma$, up to conformal
rescaling by $f$.

Conversely, let $(M^4, g, J, \omega)$ be an almost K\"ahler manifold and let $\alpha \in \Omega^-_J$.
Denote $\tilde \omega_{\alpha} = \omega + \alpha$. This is a non-degenerate, $g$ self-dual form, so
(up to a conformal normalization) it induces another $g$-compatible almost complex structure which we denote
$\tilde J_{\alpha}$. It is clear that $\tilde J_{\alpha}$ is tamed by $\omega$.

Donaldson's Question \ref{Donaldson} is equivalent to
\begin{question} \label{akDonaldson}
Is it true that for any almost  K\"ahler manifold $(M^4, g, J, \omega)$ and any $\alpha \in \Omega^-_J$,
the almost complex structure $\tilde J_{\alpha}$ is compatible with a symplectic form?
\end{question}

Using this set-up and Lemma \ref{hodgedim4}, we obtain the following
partial result.
\begin{prop} \label{simplecase}
With the notations above, if the 2-form $\alpha$ satisfies the point-wise condition
\begin{equation} \label{simplealpha}
 2 + |\alpha|^2 - 4|(\alpha^{exact})_g^-|^2 > 0 ,
\end{equation}
then $\tilde J_{\alpha}$ is compatible with  a symplectic form.
\end{prop}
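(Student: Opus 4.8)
The plan is to exhibit one explicit closed $\tilde J_{\alpha}$-invariant $2$-form and to show that hypothesis (\ref{simplealpha}) is precisely the pointwise condition making it positive definite, hence symplectic and compatible with $\tilde J_{\alpha}$. Since $\alpha \in \Omega^-_J$ lies in $\Lambda^-_J \subset \Lambda^+_g$, it is $g$-self-dual, so I would apply Lemma \ref{hodgedim4} directly to $\alpha$: writing its Hodge decomposition $\alpha = \alpha_h + \alpha^{exact} + \delta\Psi$, the lemma yields that $\alpha + 2(\alpha^{exact})_g^- = \alpha_h + 2\alpha^{exact}$ is closed. As $(M,g,J,\omega)$ is almost K\"ahler, $\omega$ is closed, so the candidate
\begin{equation}
\sigma = \omega + \alpha + 2(\alpha^{exact})_g^- = \omega + \alpha_h + 2\alpha^{exact}
\end{equation}
is a closed $2$-form.

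Next I would check that $\sigma$ is $\tilde J_{\alpha}$-invariant by splitting it into its $g$-self-dual and $g$-anti-self-dual parts. Both $\omega$ and $\alpha$ are $g$-self-dual, while $(\alpha^{exact})_g^-$ is by definition anti-self-dual; hence $\sigma_g^+ = \omega + \alpha = \tilde\omega_{\alpha}$, the (unnormalized) fundamental form of $(g,\tilde J_{\alpha})$, and $\sigma_g^- = 2(\alpha^{exact})_g^-$. By (\ref{type-Jinv}) applied to the pair $(\tilde J_{\alpha}, g)$, both the self-dual line $\underline{\mathbb R}(\tilde\omega_{\alpha})$ and all of $\Lambda_g^-$ lie in $\Lambda^+_{\tilde J_{\alpha}}$, so $\sigma \in \Omega^+_{\tilde J_{\alpha}}$.

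Finally I would verify positivity. Because $\omega \perp \alpha$ pointwise we have $|\tilde\omega_{\alpha}|^2 = 2 + |\alpha|^2$, and since a self-dual form wedged with an anti-self-dual form vanishes, $\sigma \wedge \tilde\omega_{\alpha} = \tilde\omega_{\alpha} \wedge \tilde\omega_{\alpha} = (2 + |\alpha|^2)\,dV > 0$ automatically; moreover $\sigma \wedge \sigma = (|\sigma_g^+|^2 - |\sigma_g^-|^2)\,dV = (2 + |\alpha|^2 - 4|(\alpha^{exact})_g^-|^2)\,dV$, which is positive exactly under (\ref{simplealpha}). Thus $\sigma$ is a symplectic form compatible with $\tilde J_{\alpha}$. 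The step needing the most care is the positivity criterion itself: for a $\tilde J_{\alpha}$-invariant form $\sigma = a\,\hat\omega + \eta$, with $\hat\omega$ the normalized fundamental form and $\eta \in \Lambda_g^-$, the associated Hermitian form has eigenvalues $a \pm |\eta|/\sqrt{2}$, and one must confirm these are both positive precisely when $\sigma\wedge\hat\omega>0$ and $\sigma\wedge\sigma>0$, and then match $\sigma\wedge\sigma>0$ with (\ref{simplealpha}); everything else is bookkeeping of the self-dual and anti-self-dual components.
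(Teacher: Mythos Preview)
Your proof is correct and follows essentially the same route as the paper: the paper applies Lemma~\ref{hodgedim4} to $\tilde\omega_{\alpha}=\omega+\alpha$ (noting $(\tilde\omega_{\alpha})^{exact}=\alpha^{exact}$ since $\omega$ is harmonic) to produce the same closed $\tilde J_{\alpha}$-invariant form $\omega+\alpha+2(\alpha^{exact})_g^-$, and then asserts that \eqref{simplealpha} is exactly the positive-definiteness condition. Your version simply spells out the $\tilde J_{\alpha}$-invariance via \eqref{type-Jinv} and the positivity criterion in more detail than the paper does.
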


\begin{proof} We just apply Lemma \ref{hodgedim4} to $\tilde \omega_{\alpha} = \omega + \alpha$. The form
$$\tilde \omega_{\alpha} + 2(\tilde \omega_{\alpha}^{exact})_g^- = \omega + \alpha + 2(\alpha^{exact})_g^-$$
is closed and  $\tilde J_{\alpha}$-invariant. Condition (\ref{simplealpha}) is equivalent to this form being point-wise
positive definite.
\end{proof}

\begin{remark}
When $\alpha$ is closed (hence harmonic), condition
(\ref{simplealpha}) is trivially satisfied. In this case, $\tilde
\omega_{\alpha}$ is itself a symplectic form. Proposition
\ref{simplecase} basically says that if $\alpha$ is not too far from
being closed, then $\tilde J_{\alpha}$ is compatible with a
symplectic form. The result can be seen in relation with the
openness result of Donaldson \cite{D}.
\end{remark}

If $\alpha$ does not satisfy (\ref{simplealpha}), Lemma
\ref{hodgedim4} may still help in the search for a symplectic form
compatible with $\tilde J_{\alpha}$. Let $(M^4, g, J, \omega)$ be
the fixed almost K\"ahler structure. Note that by \eqref{type-Jinv}
 any $\tilde J_{\alpha}$-invariant form $\Omega_{\alpha}$ can be
written as
$$ \Omega_{\alpha} = f \tilde \omega_{\alpha} + \theta , \mbox{ with } f \in C^{\infty}(M) \mbox{ and } \theta \in \Omega^-_g.$$
Applying Lemma \ref{hodgedim4} to $ f \tilde \omega_{\alpha}$, we
get that $\Omega_{\alpha}$ is also closed if and only if $\tilde \theta = \theta -
2((f \tilde \omega_{\alpha})^{exact})_g^-$ is closed, hence
harmonic. Thus, a potential symplectic form
$\Omega_{\alpha}$ which is $\tilde J_{\alpha}$-compatible must be of
the type
$$ \Omega_{\alpha} = f \tilde \omega_{\alpha} + 2((f \tilde \omega_{\alpha})^{exact})_g^- + \tilde \theta ,
\mbox{ with } f \in C^{\infty}(M) \mbox{ and } \tilde \theta \in \mathcal{H}^-_g. $$
Now the question becomes how should one choose $f \in C^{\infty}(M) \mbox{ and } \tilde \theta \in \mathcal{H}^-_g$ to satisfy
$\Omega_{\alpha}^2 > 0$ everywhere on $M$.

\vspace{0.2cm}


\begin{thebibliography}{99}
%\bibitem{A} Abreu

%\bibitem{Ap} V. Apostolov, \textit{Bihermitian surfaces with odd first Betti number},
% Math. Z. 238 (2001), no. 3, 555--568

%\bibitem{aa} V. Apostolov, J. Armstrong, \textit{Symplectic 4-manifolds with Hermitian Weyl tensor},
%Trans. Amer. Math. Soc. 352 (2000), no. 10, 4501--4513

%\bibitem{aad} V. Apostolov, J. Armstrong, T. Draghici,
%\textit{Local models and integrability of certain almost K\"ahler 4-manifolds}, Math. Annalen
%323 (2002), 633-666.

%\bibitem{ad-dga} V. Apostolov, T. Draghici, \textit{Hermitian conformal classes and almost K\"ahler structures on $4$-manifolds},
% Differential Geom. Appl. 11 (1999), no. 2, 179--195

\bibitem{AGG} V. Apostolov, P. Gauduchon, G. Grantcharov, \textit{Bi-Hermitian structures on complex
surfaces}, Proc. London Math. Soc. (3) 79 (1999), no. 2, 414--428;  {\it Corrigendum},
{\bf 92} (2006), 200--202.

\bibitem{bar} C. B\"ar, \textit{On nodal sets for Dirac and Laplace operators}, Comm. Math. Phys. 188 (1997), no. 3, 709--721

\bibitem{BPV} W. Barth, K. Hulek, C. Peters, A. Van de Ven, \textit{Compact complex surfaces},
 Ergebnisse der Mathematik und ihrer Grenzgebiete. 3. Folge. A Series of Modern Surveys in Mathematics [Results in Mathematics and Related Areas. 3rd Series. A Series of Modern Surveys in Mathematics], 4. Springer-Verlag, Berlin, 2004.
%\bibitem{Br} J.-L. Brylinski, \textit{A differential complex for Poisson manifolds},  J. Differential Geom. 28 (1988), no.1, %93--114.
%\bibitem{B} S. Bauer,
%\bibitem{DGMS} P. Deligne, P. Griffiths, J. Morgan, D. Sullivan,
%\textit{Real homotopy theory of K\"ahler manifolds}, Invent. Math.
%29 (1975), no. 3, 245--274.
%\bibitem{DP} J. Demailly, M. Paun, \textit{Numerical characterization of the K\"ahler cone
%of a compact K\"ahler manifold}, Ann. of Math. (2) 159 (2004), no.3,
%1247--1274
%\bibitem{DR} G. de Rham, \textit{Differentiable manifolds. Forms, currents, harmonic
%forms}, Grundlehren der Mathematischen Wissenschaften [Fundamental
%Principles of Mathematical Sciences], 266. Springer-Verlag, Berlin,
%1984. x+167 pp
\bibitem{D} S. K. Donaldson, \textit{Two-forms on
four-manifolds and elliptic equations}, Inspired by S. S. Chern,
153--172, Nankai Tracts Math., 11, World Sci. Publ., Hackensack, NJ,
2006.
\bibitem{DLZ2} T. Draghici, T. J. Li, W. Zhang, \textit{On the $J$-anti-invariant cohomology of almost complex 4-manifolds}, arXiv:1104.2511.

\bibitem{FT} A. Fino, A. Tomassini, \textit{On some cohomological
properties of almost complex manifolds}, J. Geom. Anal. 20 (2010), no. 1, 107–131, arXiv: 0811.4675v2.
%\bibitem{gauduchon1} P. Gauduchon, {\it Fibr\'es hermitiens \`a endomorphisme de Ricci
%non-negatif}, Bull.Soc.Math.Fr. 105 (1977), 113-140.
%\bibitem{gauduchon2} P. Gauduchon, {\it La 1-forme de torsion d'une vari\'et\'e
%hermitienne compacte}, Math.Ann. 267 (1984), 495-518.
%\bibitem{Gualtieri} M.~Gualtieri, {\it Generalized complex geometry}, D. Phil. Thesis, University of Oxford, 2003, %math.DG/0401221.
%\bibitem{Hitchin} N. Hitchin, \textit{Bihermitian metrics on del Pezzo surfaces}, J. Symplectic Geom. 5 (2007), no. 1, 1--8
\bibitem{Hi2} N. Hitchin, \textit{The self-duality equations on a Riemann surface}, Proc. London Math. Soc. 55 (1987), 59--126.
%\bibitem{LeB} C. LeBrun,
%\bibitem{Le2} C. LeBrun, {\it Four-manifolds without Einstein metrics}, Math. Res. Lett. 3. (1996), 133-147, dg-ga/9511015.
%\bibitem{Lee} J. Lee, \textit{Family Gromov-Witten Invariants for K\"ahler
%Surfaces}, Duke Math. J. 123 (2004), no. 1, 209--233.
%\bibitem{Lej} M. Lejmi, \textit{Strictly nearly K\"ahler 6-manifolds are not compatible with symplectic forms},
% C. R. Math. Acad. Sci. Paris 343 (2006), no. 11-12, 759--762.
%\bibitem{L} T.-J Li, Quaternionic vector bundles and Betti
%numbers of symplectic 4-manifolds with Kodaira dimension zero,
%Internat. Math. Res. Notices, 2006 (2006), 1-28
%\bibitem {L1} T. J. Li,  {\it Symplectic 4-manifolds with Kodaira dimension zero},
%J. Differential Geometry 74 (2006) 321-352.
\bibitem{LZ} T.-J. Li, W. Zhang, \textit{Comparing tamed and compatible symplectic cones and cohomological properties of almost complex manifolds}, Comm. Anal. Geom. 17 (2009), no. 4, 651-683, arXiv:0708.2520.
%\bibitem{MS} D. McDuff, D. Salamon, \textit{$J$-holomorphic curves and symplectic topology}, American Mathematical Society %Colloquium Publications, 52. American Mathematical Society, Providence, RI, 2004. xii+669 pp.
%\bibitem{Po} M. Pontecorvo, \textit{Complex structures on Riemannian
%4-manifolds}, Math. Ann. 309 (1997), 159-177.
\bibitem{Sal} S. Salamon, \textit{Special structures on four-manifolds}, Conference on Differential Geometry and Topology (Italian)
(Parma, 1991), Riv. Mat. Univ. Parma (4) 17* (1991), 109--123 (1993).
%\bibitem {McS} D. McDuff and D. Salamon, {\it A survey of symplectic
%$4-$manifolds with $b^+=1$}, Proc. Gokovo conference, 1995, 47-60.
\bibitem{Sco} A. Scorpan, \textit{Existence of foliations on 4-manifolds}, Algebr. Geom. Topol. 3 (2003), 1225--1256.
\bibitem{TW}  V. Tosatti, B. Weinkove,   \textit{The Calabi-Yau equation, symplectic forms and almost complex structures}, Geometry and Analysis, No. 1, 475-493, Advanced Lectures in Math. 17 (2001), International Press, arXiv:0901.1501. 
\bibitem{TWY} V. Tosatti, B. Weinkove, S.T. Yau, \textit{Taming symplectic forms and the Calabi-Yau equation},
 Proc. Lond. Math. Soc. (3) 97 (2008),  no. 2, 401--424.
\bibitem{W} B. Weinkove, \textit{The Calabi-Yau equations on almost K\"ahler
manifolds}, J. Differential Geometry, vol 76 (2007), no. 2, 317-349.
%\bibitem{Yan} D. Yan, \textit{Hodge structure on symplectic manifolds}, Adv. Math. 120 (1996), no.1, 143--154.
%\bibitem{Y} S. T. Yau, \textit{On the Ricci curvature of a complete K\"ahler
%manifold and the complex Monge-Amp\`ere equation}, Comm. Pure Appl.
%Math. 31 (1978), 339-411.
\end{thebibliography}
\end{document}